\DeclareMathOperator{\Tr}{Tr}
\DeclareMathOperator{\N}{N}
\title{A new family of MRD-codes}
\author{Bence Csajb\'ok, Giuseppe Marino, Olga Polverino, Corrado Zanella}
\date{ }
\newcommand{\cC}{{\mathcal C}}
\newcommand{\cN}{{\mathcal N}}
\newcommand{\cG}{{\mathcal G}}
\newcommand{\cF}{{\mathcal F}}
\newcommand{\cP}{{\mathcal P}}
\newcommand{\cH}{{\mathcal H}}
\newcommand{\F}{{\mathbb F}}
\newcommand{\la}{\langle}
\newcommand{\ra}{\rangle}
\newcommand{\V}{\mathbb V}
\newtheorem{theorem}{Theorem}[section]
\newtheorem{lemma}[theorem]{Lemma}
\newtheorem{corollary}[theorem]{Corollary}
\newtheorem{proposition}[theorem]{Proposition}
\newtheorem{conjecture}[theorem]{Conjecture}
\newtheorem{remark}[theorem]{Remark}
\DeclareMathOperator{\PG}{{PG}}
\DeclareMathOperator{\GL}{{GL}}
\begin{document}
\maketitle

\begin{abstract}
We introduce a family of linear sets of $\PG(1,q^{2n})$ arising from maximum scattered linear sets of pseudoregulus type of $\PG(3,q^{n})$.
For $n=3,4$ and for certain values of the parameters we show that these linear sets of $\PG(1,q^{2n})$ are maximum scattered and they yield
new MRD-codes with parameters $(6,6,q;5)$ for $q>2$ and with parameters $(8,8,q;7)$ for $q$ odd.
\end{abstract}

\bigskip
{\it AMS subject classification: 51E20, 05B25, 51E22}

\bigskip
{\it Keywords: Scattered subspace, MRD-code, linear set}

\section{Introduction}
\label{sec:Intro}

Linear sets are natural generalizations of subgeometries.
Let $\Lambda=\PG(V,\F_{q^n})\allowbreak=\PG(r-1,q^n)$, where $V$ is a vector space of dimension $r$ over $\F_{q^n}$. A point set $L$ of $\Lambda$ is said to be an \emph{$\F_q$-linear set} of $\Lambda$ of rank $k$ if it is
defined by the non-zero vectors of a $k$-dimensional $\F_q$-vector subspace $U$ of $V$, i.e.
\[L=L_U=\{\la {\bf u} \ra_{\mathbb{F}_{q^n}} \colon {\bf u}\in U\setminus \{{\bf 0} \}\}.\]
The maximum field of linearity of an $\F_q$-linear set $L_U$ is $\F_{q^t}$ if $t \mid n$ is the largest integer such that $L_U$ is an $\F_{q^t}$-linear set.

Two linear sets $L_U$ and $L_W$ of $\PG(r-1,q^n)$ are said to be \emph{$\mathrm{P\Gamma L}$-equivalent} (or simply \emph{equivalent}) if there is an element $\phi$ in $\mathrm{P\Gamma L}(r,q^n)$ such that $L_U^{\phi} = L_W$. It may happen that two $\F_q$--linear sets $L_U$ and $L_W$ of $\PG(r-1,q^n)$ are equivalent even if the two $\F_q$-vector subspaces $U$ and $W$ are not in the same orbit of $\Gamma \mathrm{L}(r,q^n)$ (see \cite{CSZ2015} and \cite{CSMP2016} for further details).

In \cite[Section 4]{Sh}, the author showed that scattered linear sets of $\PG(1,q^m)$ of rank $m$ yield $\F_q$-linear MRD-codes of dimension $2m$ and minimum distance $m-1$. Precisely, such codes are all $\F_q$-linear MRD-codes of dimension $2m$, minimum distance $m-1$ and middle nucleus of order $q^m$ (cf. Proposition \ref{prop:nuclei}). This result has been recently generalized in \cite{CSMPZ2016}. The number of non-equivalent MRD-codes obtained from a scattered linear set of $\PG(1,q^m)$ of rank $m$ was studied in
\cite[Section 5.4]{CSMP2016}. In \cite{Lu2017} the author investigated in detail the relationship between linear sets of $\PG(n-1,q^n)$ of rank $n$ and $\F_q$-linear MRD-codes.

So far, the known non-equivalent families of $\F_q$-linear MRD-codes of dimension $2m$, minimum distance $m-1$ and with \emph{middle nucleus} $\F_{q^m}$
(which is an invariant with respect to the equivalence on MRD-codes, see Section \ref{sec:MRD}) arise from the following maximum scattered $\F_q$--vector subspaces of $\F_{q^m}\times\F_{q^m}$:

\begin{enumerate}
\item $U_1:= \{(x,x^{q^s}) \colon x\in \F_{q^m}\}$, $1\leq s\leq m-1$ $\gcd(s,m)=1$ (\cite{BL2000}) gives Gabidulin codes when $s=1$, and generalized Gabidulin codes when $s>1$;
\item $U_2:= \{(x,\delta x^{q^s} + x^{q^{m-s}})\colon x\in \F_{q^m}\}$, $\N_{q^m/q}(\delta)\neq 1$ (\footnote{$\N_{q^m/q}(\cdot)$ denotes the norm function from $\F_{q^m}$ over $\F_q$.}), $\gcd(s,m)=1$ (\cite{LP2001} for $s=1$) gives MRD-codes found by Sheekey in \cite{Sh} as part of a larger family. The equivalence issue for these codes was studied also by Lunardon, Trombetti and Zhou in \cite{LTZ}.
\end{enumerate}

In this paper we present a family of $\F_{q}$-linear sets of rank $m$ of $\PG(1,q^{m})$, $m=2n$ and $n>1$, arising from $\F_q$-linear sets of $\PG(3,q^n)$ of pseudoregulus type. These linear sets are defined by the following $\F_q$-vector subspaces of $\F_{q^m}\times\F_{q^m}$:
\begin{equation}\label{form:newspace}
U_{b,s}:=\{(x,b x^{q^s} + x^{q^{s+n}}) \colon x\in \F_{q^{2n}}\}
\end{equation}
with $\N_{q^{2n}/q^n}(b)\neq 1$, $1\leq s\leq 2n-1$ and $\gcd(s,n)=1.$

We will show that each point of $L_{U_{b,s}}$ has weight at most 2 (cf. Proposition \ref{notgreaterthan2}) and when $L_{U_{b,s}}$ is scattered and $m>4$, then, as we will see in Section \ref{sec:MRD}, the corresponding MRD-code is not equivalent to any previously known MRD-code with the same parameters. Finally, in the last section, we exhibit for $m=6$ and $m=8$ infinite examples of scattered $\F_q$-subspaces of type $U_{b,s}$ and hence new infinite families of MRD-codes.

\section{Linear sets}
Let $L_U$ be an $\F_q$-linear set of $\Lambda=\PG(r-1,q^n)$, $q=p^h$, $p$ prime, of rank $k$. We point out that different vector subspaces can define the same linear set. For this reason a linear set and the vector space defining it must be considered as coming in pair.

Let  $\Omega=\PG(W,\F_{q^n})$ be a subspace of $\Lambda$, then $\Omega\cap L_U$ is an $\F_q$--linear set of $\Omega$ defined by the $\F_q$--vector
subspace $U\cap W$ and, if $w_{L_U}(\Omega):=\dim_{\F_q}(W\cap U)=i$, we say that $\Omega$ has {\it weight $i$} w.r.t. $L_U$. Hence a point of $\Lambda$ belongs to $L_U$ if and only if it has weight at least 1 and, if $L_U$ has rank $k$, then $|L_U|\leq  q^{k-1}+q^{k-2}+\dots+q+1$. For further details on linear sets see \cite{OP2010} and \cite{LV2013}.

An $\F_q$--linear set $L_U$ of $\Lambda$ of rank $k$ is {\em scattered} if all of its points have weight 1, or equivalently, if
$L_U$ has maximum size $q^{k-1}+q^{k-2}+\cdots+q+1$. The associated $\F_q$--vector subspace $U$ is said to be {\em scattered}.
A scattered $\F_q$--linear set of $\Lambda$ of highest possible
rank is a {\it maximum scattered $\F_q$--linear set} of $\Lambda$; see \cite{BL2000}. Maximum scattered linear sets have a lot of applications in Galois Geometry. For a recent survey on the theory of scattered spaces in Galois Geometry and its applications see \cite{Lavrauw}.

The rank of a scattered $\F_q$-linear set of $\PG(r-1,q^n)$, $rn$ even, is at most $rn/2$ (\cite[Theorems 2.1, 4.2 and 4.3]{BL2000}).
For $n=2$ scattered $\F_q$-linear sets of $\PG(r-1,q^2)$ of rank $r$ are the Baer subgeometries.
When $r$ is even there always exist scattered $\F_q$--linear sets of rank
$\frac{rn}2$ in $\PG(r-1,q^n)$, for any $n\geq 2$ (see \cite[Theorem 2.5.5]{LPhdThesis} for an explicit example).
Existence results were proved for $r$ odd, $n-1 \leq r$, $n$ even, and $q>2$ in \cite[Theorem 4.4]{BL2000}, but
no explicit constructions were known for $r$ odd, except for the case $r=3$, $n=4$, see \cite[Section 3]{BBL2000}.
Very recently in \cite[Theorem 1.2]{BGMP2015} and in \cite[Section 2]{CSMPZ2016} maximum scattered $\F_q$-linear sets of $\PG(r-1,q^n)$ of rank $rn/2$ have been constructed for any integers $r, n\geq 2$, $rn$ even, and for any prime power $q\geq 2$.

\subsection{\texorpdfstring{Scattered linear sets of pseudoregulus type in $\PG(3,q^n)$}{Scattered linear sets of pseudoregulus type in PG(3,qn)}}

In \cite{LuMaPoTr2014}, generalizing results contained in
\cite{MPT2007}, \cite{LMPT} and  \cite{LV}, a family of maximum
scattered linear sets of
$\PG(2h-1,q^n)$ of rank $hn$ ($h,n \geq 2$), called of {\it pseudoregulus type}, is
introduced. In particular, a maximum scattered $\F_q$--linear set $L_U$ of $\Lambda=\PG(3,q^n)$ of rank $2n$ is of {\it pseudoregulus type} if
(i) there exist $q^n+1$ pairwise disjoint lines of $L_U$ of weight $n$ w.r.t. $L_U$, say
$s_1,s_2,\dots,s_{q^n+1}$;\\ (ii) there exist exactly two skew lines
$t_1$ and $t_2$ of $\Lambda$, disjoint from $L_U$, such that
$t_j\cap s_i\neq \emptyset$ for each $i=1,\dots,{q^n+1}$ and for
each $j=1,2$.

The set of lines $\mathcal{P}_{L_U} = \{s_i \colon
i=1,\dots,q^n+1\}$ is called the {\it $\F_q$--pseudoregulus} (or
simply {\it pseudoregulus}) of $\Lambda$ associated with $L_U$ and
$t_1$ and $t_2$ are the  {\it transversal lines} of
$\mathcal{P}_{L_U}$ (or {\it transversal lines} of $L_U$). Note that
by \cite[Corollary 3.3]{LuMaPoTr2014}, if $n>2$ the pseudoregulus
$\cP_{L_U}$ associated with $L_U$ and its transversal lines are
uniquely determined.

In \cite[Sec. 2]{LMPT} and in \cite[Theorems 3.5 and 3.9]{LuMaPoTr2014}, $\F_q$--linear sets of pseudoregulus type of $\PG(2h-1,q^n)$ of rank $hn$ ($h,n \geq 2$) have been algebraically characterized. In particular, in $\PG(3,q^n)$ we have the following result.

\begin{theorem}[]\label{thm:algebraicpseudoregulus}
Let $t_1=\PG(U_1,\F_{q^n})$ and $t_2=\PG(U_2,\F_{q^n})$ be two
disjoint lines of $\Lambda=\PG(V,\F_{q^n})=\PG(3,q^n)$ and let
$\Phi_f$ be a strictly semilinear collineation between $t_1$ and
$t_2$ defined by the $\F_{q^n}$-semilinear map $f$ with companion automorphism an element $\sigma\in
Aut(\F_{q^n})$ such that $Fix(\sigma)=\F_q$. Then, for each $\rho
\in \F_{q^n}^*$, the set
\[L_{\rho,f} = \{\langle {\bf u}+\rho f({\bf u}) \rangle_{\F_{q^n}} \colon {\bf u}\in U_1\setminus\{{\bf 0}\}\}\]
is an $\F_q$-linear set of $\Lambda$ of pseudoregulus type whose
associated pseudoregulus is $\mathcal{P}_{L_{\rho,f}}=\{\langle P,
P^{\Phi_f}\rangle\,:\, P\in t_1\}$, with transversal lines $t_1$ and
$t_2$.

Conversely, each $\F_q$--linear set of pseudoregulus type of
$\Lambda=\PG(3,q^n)$ can be obtained as described above.
\end{theorem}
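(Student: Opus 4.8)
The plan is to prove both directions of this characterization theorem for linear sets of pseudoregulus type in $\PG(3,q^n)$.

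For the forward direction, I would start by fixing coordinates adapted to the two disjoint transversal lines. Since $t_1=\PG(U_1,\F_{q^n})$ and $t_2=\PG(U_2,\F_{q^n})$ are disjoint lines in $\PG(3,q^n)$, their underlying $2$-dimensional $\F_{q^n}$-subspaces $U_1$ and $U_2$ satisfy $V=U_1\oplus U_2$. I would choose a basis so that $U_1=\langle \mathbf e_1,\mathbf e_2\rangle$ and $U_2=\langle \mathbf e_3,\mathbf e_4\rangle$, and then write the semilinear map $f$ explicitly in these coordinates: $f(\mathbf u)=M\mathbf u^\sigma$ for some invertible matrix $M$ sending $U_1$ to $U_2$, where $\sigma$ has fixed field exactly $\F_q$. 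The first key step is to verify that $L_{\rho,f}$ really is a maximum scattered linear set of rank $2n$: I would show the defining $\F_q$-subspace $\{\mathbf u+\rho f(\mathbf u):\mathbf u\in U_1\}$ has $\F_q$-dimension $2n$ (this is clear since $\mathbf u\mapsto \mathbf u+\rho f(\mathbf u)$ is $\F_q$-linear and injective on the $2n$-dimensional $\F_q$-space $U_1$), and that it is scattered, i.e.\ every point has weight $1$. Scatteredness reduces to checking that if $\mathbf u+\rho f(\mathbf u)$ and $\lambda(\mathbf u+\rho f(\mathbf u))$ both lie in the subspace for $\lambda\in\F_{q^n}^*$, then $\lambda\in\F_q$; using the direct sum decomposition and the semilinearity of $f$ this forces $\lambda=\lambda^\sigma$, hence $\lambda\in Fix(\sigma)=\F_q$.

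Next I would identify the lines of weight $n$ and the pseudoregulus structure. For each point $P=\langle\mathbf u\rangle\in t_1$, I would show the line $\langle P,P^{\Phi_f}\rangle=\langle\mathbf u,f(\mathbf u)\rangle_{\F_{q^n}}$ meets $L_{\rho,f}$ in an $\F_q$-linear set of weight $n$, because the intersection of the defining $\F_q$-space with this $2$-dimensional $\F_{q^n}$-line is precisely $\{t\mathbf u+t\rho f(\mathbf u):t\in\F_{q^n}\}$ after absorbing scalars $-$ more care is needed here, so I would directly compute $\dim_{\F_q}$ of the intersection and check it equals $n$. These $q^n+1$ lines $s_i$ are pairwise disjoint (as the points $P$ range over the $q^n+1$ points of $t_1$ and $f$ is a bijection onto $t_2$), and by construction each meets both $t_1$ and $t_2$, while $t_1,t_2$ themselves are disjoint from $L_{\rho,f}$ (the latter because a vector in $U_1$ lies in the defining space only if $\rho f(\mathbf u)=\mathbf 0$, forcing $\mathbf u=\mathbf 0$). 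This verifies conditions (i) and (ii) of the definition.

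For the converse, I would start from an arbitrary linear set $L_U$ of pseudoregulus type with its pseudoregulus $\{s_1,\dots,s_{q^n+1}\}$ and transversal lines $t_1,t_2$, which are uniquely determined when $n>2$ by the cited \cite[Corollary 3.3]{LuMaPoTr2014}. The strategy is to reconstruct a semilinear map $f$ and a scalar $\rho$ realizing $L_U$ as $L_{\rho,f}$. Each line $s_i$ meets $t_1$ in a unique point $P_i$ and $t_2$ in a unique point $Q_i$, giving a bijection $t_1\to t_2$, $P_i\mapsto Q_i$; the main work is to prove this bijection is induced by a \emph{strictly} semilinear collineation $\Phi_f$ with companion automorphism of fixed field exactly $\F_q$. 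Here I would use that $s_i\cap L_U$ has weight $n$ together with the linearity to show that, in coordinates, the correspondence $P_i\mapsto Q_i$ is governed by a field automorphism $\sigma$, and scatteredness of $L_U$ forces $Fix(\sigma)=\F_q$; finally, picking any nonzero vector on one $s_i$ pins down $\rho$. The hard part will be this converse reconstruction — specifically, extracting the semilinear (rather than merely additive or projective) nature of the transversal correspondence from the weight and scattering hypotheses, and showing the companion automorphism is forced to have fixed field exactly $\F_q$ rather than a proper subfield. This is where I expect to lean most heavily on the algebraic characterizations already established in \cite[Sec. 2]{LMPT} and \cite[Theorems 3.5 and 3.9]{LuMaPoTr2014}, rather than redoing the classification from scratch.
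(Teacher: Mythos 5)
There is an important point of orientation first: the paper contains \emph{no proof} of Theorem \ref{thm:algebraicpseudoregulus}. It is stated as an imported result, attributed in the preceding sentence to \cite[Sec.~2]{LMPT} and \cite[Theorems 3.5 and 3.9]{LuMaPoTr2014}, so there is no in-paper argument to compare yours against. Judged on its own merits, your forward direction is sound and completable along the lines you sketch: the rank and scatteredness computations via $V=U_1\oplus U_2$ are correct, and the intersection of the defining $\F_q$-subspace with the line $\langle \mathbf u, f(\mathbf u)\rangle_{\F_{q^n}}$ is $\{\lambda\mathbf u+\rho\lambda^{\sigma}f(\mathbf u)\colon\lambda\in\F_{q^n}\}$, of $\F_q$-dimension $n$, exactly the point where you rightly flag that ``more care is needed.'' Two details you should not gloss over: pairwise disjointness of the $s_i$ holds because two meeting lines of the family would span a plane containing both $t_1$ and $t_2$, which are skew; and the paper's definition of pseudoregulus type demands \emph{exactly} two transversal lines disjoint from the linear set. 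That last condition is where strict semilinearity actually does work --- if $\sigma$ were the identity the $s_i$ would form a regulus with $q^n+1$ transversals --- and your forward sketch never verifies it. A short coordinate check (any third common transversal must consist of points $\langle \mathbf u + c f(\mathbf u)\rangle$ for a fixed $c\neq 0$, and collinearity then forces $\lambda^{\sigma}=\lambda$ for all $\lambda$) closes this.

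The genuine gap is the converse. Your stated plan is to ``lean most heavily on the algebraic characterizations already established in \cite{LMPT} and \cite{LuMaPoTr2014}, rather than redoing the classification from scratch'' --- but those characterizations \emph{are} the converse you are being asked to prove, so as a proof strategy this is circular. Either you cite the result and stop (which is in effect what the paper does), or you must actually carry out the reconstruction you outline: show that the correspondence $P_i\mapsto Q_i$ between the two transversals, read through the rank-$n$ intersections $s_i\cap L_U$ and the $\F_q$-linearity of $U$, is induced by a $\sigma$-semilinear bijection $U_1\to U_2$ for a single automorphism $\sigma$; that scatteredness of $L_U$ forces $\mathrm{Fix}(\sigma)=\F_q$ (a proper overfield of $\F_q$ as fixed field would produce points of weight greater than one); and that $\sigma\neq\mathrm{id}$ is forced by the ``exactly two transversals'' hypothesis. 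None of these steps is supplied in your proposal, and they constitute essentially all of the content of the converse.
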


In \cite{LuMaPoTr2014},  $\F_q$-linear sets of pseudoregulus type
of the projective line $\Lambda=\PG(V,\F_{q^n})=\PG(1,q^n)$ ($n\geq
2$) are also introduced. Let $P_1=\langle {\bf w} \rangle$
and $P_2=\langle {\bf v} \rangle$ be two distinct points of
the line $\Lambda$ and let $\tau$ be an $\F_q$-automorphism of
$\F_{q^n}$ such that $Fix(\tau)=\F_q$; then for each $\rho \in
\F_{q^n}^*$ the set
\begin{equation}\label{pseudoline}
W_{\rho,\tau}=\{\lambda {\bf w} +
\rho\lambda^{\tau} {\bf v}  \colon \lambda \in
\F_{q^n}\},
\end{equation}
is an $\F_q$--vector subspace of $V$ of dimension $n$ and  $L_{\rho,\tau}:=L_{W_{\rho,\tau}}$ is a maximum scattered $\F_q$-linear set of $\Lambda$. The linear sets $L_{\rho,\tau}$  are called of {\it pseudoregulus type} and  the points $P_1$ and $P_2$ are their {\it transversal points}. Also, if $n>2$, then these transversal points are uniquely determined (\cite[Prop. 4.3]{LuMaPoTr2014}). For more details on such linear sets see \cite{CSZ2016}.
\textcolor{black}{ Also, by \cite[Remark 4.5]{LuMaPoTr2014}, if $L_U$ is an $\F_q$-linear set of pseudoregulus type of $\PG(3,q^n)$, and
$s$ is a line of weight $n$ w.r.t. $L_U$, then $L_U\cap s$ is an $\F_q$-linear set of pseudoregulus type of the line $s$ whose transversal points are the intersection points of $s$ with the transversal lines of
$\mathcal{P}_{L_U}$ (see also \cite[Prop. 2.5]{LaMaPoTr2013} and \cite[Theorem 2.8]{MaPo2015} for further details).}

\section{\texorpdfstring{Linear sets and dual linear sets in $\PG(1,q^n)$}{Linear sets and dual linear sets in PG(1,qn)}}\label{sec:dual}

Let  $\V=\mathbb F_{q^n} \times \mathbb F_{q^n}$ and let $L_U$ be an $\F_q$--linear set of rank $n$ of $\PG(1,q^n)=\PG(\V,\F_{q^n})$. We can always assume (up to a projectivity) that $L_U$ does not contain the point $\la(0,1)\ra_{\F_{q^n}}$. Then $U=U_f=\{(x,f(x)) \colon x\in \F_{q^n}\}$, for some $q$-polynomial $f(x)=\sum_{i=0}^{n-1} a_i x^{q^i}$ over $\F_{q^n}$. For the sake of simplicity we will write $L_f$ instead of $L_{U_f}$ to denote the linear set defined by $U_f$.

Consider the non-degenerate symmetric bilinear form of $\mathbb F_{q^n}$
over $\F_q$ defined by the following rule
\begin{equation}\label{form:angle}
<x,y>:=\Tr_{q^n/q}(xy). (\footnote{$\Tr_{q^n/q}(\cdot)$ denotes the trace function from $\F_{q^n}$ over $\F_q$.})
\end{equation}

\noindent Then the {\it adjoint map} $\hat f$ of an $\F_q$-linear map $f(x)=\sum_{i=0}^{n-1}a_i x^{q^i}$ of $\F_{q^n}$ (with respect to the bilinear form \eqref{form:angle}) is
\begin{equation}
\hat f(x):=\sum_{i=0}^{n-1}a_i^{q^{n-i}} x^{q^{n-i}}.
\end{equation}

Let $\eta: \V \times \V \longrightarrow \F_{q^{n}}$ be the  non-degenerate alternating bilinear  form of
$\V$ defined by $\eta((x,y), (u,v))=xv-yu$. Then $\eta$ induces a symplectic polarity $\tau$ on the line $\PG(\V,\F_{q^n})$ and
\begin{equation}\label{form:perp1}
\eta'((x,y), (u,v)):=\Tr_{q^n/q}(\eta((x,y), (u,v)))=\Tr_{q^n/q}(xv-yu)
\end{equation}
is a non-degenerate alternating bilinear form on $\V$, when $\V$ is regarded as a $2n$-dimensional vector space over $\F_{q}$.
We will always denote in the paper by $\perp$ and $\perp'$ the orthogonal complement maps defined by $\eta$ and $\eta'$ on the lattices of
the $\F_{q^n}$-subspaces and the $\F_{q}$-subspaces of $\V$, respectively. Direct calculation shows
that \begin{equation}\label{form:hatf} U_{f}^{\perp '}=U_{\hat f},\end{equation}
and the $\F_q$--linear set of rank $n$ of $\PG(\V,\F_{q^n})$ defined by the orthogonal complement
$U^{\perp'}$  is called  {\it the dual linear set of $L_U$} with respect to the polarity $\tau$.

Recall the following lemma.

\begin{lemma}[{\cite[Lemma 2.6]{BGMP2015}, \cite[Lemma 3.1]{CSMP2016}}]
\label{lem2} Let $L_f=\{\langle(x,f(x))\rangle_{\F_{q^n}} \colon x\in \F_{q^n}^*\}$ be an $\F_q$--linear set of $\PG(1,q^n)$ of rank $n$, with $f(x)$ a $q$-polynomial over $\F_{q^n}$, and let $\hat f$ be the adjoint of $f$ with respect to the bilinear form (\ref{form:angle}). Then for each point $P\in \PG(1,q^n)$ we have $w_{L_{f}}(P)=w_{L_{\hat f}}(P)$. In particular,
$L_{f}=L_{\hat f}$ and the maps defined by $f(x)/x$ and $\hat f(x)/x$  have
the same image.
\end{lemma}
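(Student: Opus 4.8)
The plan is to work in the ambient space $\V$ regarded as a $2n$-dimensional vector space over $\F_q$, and to translate the equality of weights into a statement about $\F_q$-orthogonal complements, using the identity $U_f^{\perp'}=U_{\hat f}$ recorded in \eqref{form:hatf}. Fix a point $P\in\PG(1,q^n)$ and let $W$ be the corresponding $\F_{q^n}$-subspace of $\V$, so that $\dim_{\F_{q^n}}W=1$ and $\dim_{\F_q}W=n$. By definition $w_{L_g}(P)=\dim_{\F_q}(U_g\cap W)$ for $g\in\{f,\hat f\}$, so it suffices to prove $\dim_{\F_q}(U_f\cap W)=\dim_{\F_q}(U_f^{\perp'}\cap W)$.

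The first ingredient I would establish is that the two polarities agree on $\F_{q^n}$-subspaces: for every $\F_{q^n}$-subspace $S$ of $\V$ one has $S^{\perp'}=S^{\perp}$. The inclusion $S^{\perp}\subseteq S^{\perp'}$ is immediate from $\eta'=\Tr_{q^n/q}\circ\,\eta$; the reverse inclusion follows because $S$ is $\F_{q^n}$-invariant, so if $\Tr_{q^n/q}(\eta(v,w))=0$ for all $w\in S$ then also $\Tr_{q^n/q}(\alpha\,\eta(v,w))=0$ for all $\alpha\in\F_{q^n}$, and non-degeneracy of the trace form forces $\eta(v,w)=0$. The second ingredient is that $W$ is totally isotropic for $\eta$: since $\eta$ is alternating and $\dim_{\F_{q^n}}W=1$, a direct check gives $\eta(w,w')=0$ for all $w,w'\in W$, whence $W\subseteq W^{\perp}$, and comparing $\F_{q^n}$-dimensions yields $W^{\perp}=W$. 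Combining the two ingredients, $W^{\perp'}=W$.

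With these facts in hand the computation is short. Using $A^{\perp'}\cap B^{\perp'}=(A+B)^{\perp'}$ for the non-degenerate form $\eta'$, together with $W^{\perp'}=W$, I get
\[
U_f^{\perp'}\cap W=U_f^{\perp'}\cap W^{\perp'}=(U_f+W)^{\perp'}.
\]
Since $\eta'$ is non-degenerate on the $2n$-dimensional $\F_q$-space $\V$, this has $\F_q$-dimension $2n-\dim_{\F_q}(U_f+W)$, and Grassmann's identity gives $\dim_{\F_q}(U_f+W)=2n-\dim_{\F_q}(U_f\cap W)$. Hence $\dim_{\F_q}(U_f^{\perp'}\cap W)=\dim_{\F_q}(U_f\cap W)$, i.e. $w_{L_{\hat f}}(P)=w_{L_f}(P)$, as wanted. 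The two remaining assertions are then immediate: a point lies in a linear set exactly when its weight is at least $1$, so $L_f=L_{\hat f}$ as point sets; and since every point of $L_f$ has the shape $\langle(1,f(x)/x)\rangle_{\F_{q^n}}$ with $x\in\F_{q^n}^*$, the equality of the point sets translates into the equality of the images of $f(x)/x$ and $\hat f(x)/x$.

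The argument is essentially a dimension count, so there is no deep obstacle; the one point that must be handled with care, and which is the crux, is the interplay between the $\F_q$-form $\eta'$ and the $\F_{q^n}$-form $\eta$, namely the identity $S^{\perp'}=S^{\perp}$ for $\F_{q^n}$-subspaces and the resulting self-duality $W^{\perp'}=W$ of the line $W$. Everything else follows formally from non-degeneracy of $\eta'$ and \eqref{form:hatf}.
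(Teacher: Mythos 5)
Your argument is correct, and it is essentially the standard proof: the paper itself only recalls this lemma with citations to \cite{BGMP2015} and \cite{CSMP2016} and gives no proof, but the proof in those references is exactly your duality computation, namely $U_{\hat f}=U_f^{\perp'}$, the self-duality $W^{\perp'}=W$ of the $\F_{q^n}$-line corresponding to $P$, and the dimension count $\dim_{\F_q}\bigl((U_f+W)^{\perp'}\bigr)=2n-\dim_{\F_q}(U_f+W)=\dim_{\F_q}(U_f\cap W)$. Nothing is missing.
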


\section{\texorpdfstring{From the geometry in $\PG(3,q^n)$ to the geometry in $\PG(1,q^{2n})$}{Geometry in PG(3,qn)}}

From now on, we will consider $\V=\F_{q^{2n}}\times \F_{q^{2n}}$ both as a 2-dimensional vector space over $\F_{q^{2n}}$ and as a 4-dimensional vector space over $\F_{q^n}$. In the former case the linear set of $\Sigma_1:=\PG(\V,\F_{q^{2n}})=\PG(1,q^{2n})$ defined by an $\F_q$-subspace $U \leq \V$ will be denoted as $L_U$, in the latter case the linear set of $\Sigma_3:=\PG(\V,\F_{q^n})=\PG(3,q^n)$ defined by $U$ will be denoted by $\bar L_U$.

Consider the following two skew lines of $\Sigma_3$:
$\ell_0:=\{\la(x,0)\ra_{\F_{q^{n}}}\colon x\in \F_{q^{2n}}^*\}$ and $\ell_1:=\{\la(0,y)\ra_{\F_{q^n}}\colon y\in \F_{q^{2n}}^*\}$.
By Theorem \ref{thm:algebraicpseudoregulus}, $\F_q$-linear sets of pseudoregulus type in $\Sigma_3$ with transversal lines $\ell_0$ and $\ell_1$ are of the form $\bar L_{f}:=\bar L_{U_f}$, where $U_f=\{(x,f(x))\colon x\in \F_{q^{2n}}\}$, and $f(x)$ is a strictly $\F_{q^n}$-semilinear invertible map of $\F_{q^{2n}}$ with companion automorphism $\sigma$, $Fix(\sigma)=\F_q$. It is easy to see that this happens if and only if $f(x)=\alpha x^{\sigma}+\beta x^{\sigma q^n}$, where $\sigma \colon x \mapsto x^{q^s}$, $1\leq s\leq 2n-1$, $\gcd(s,n)=1$, and $\N_{q^{2n}/q^n}(\alpha)\ne \N_{q^{2n}/q^n}(\beta)$. That is,
\begin{equation}\label{form:Uf}
U_f=\{(x,\alpha x^{\sigma}+\beta x^{\sigma q^n})\colon x\in \F_{q^{2n}}\},
\end{equation}
with the same conditions as above. In $\Sigma_1$ the $\F_q$-linear set $L_f:=L_{U_f}$ is not necessarily scattered, but as the next result shows, it cannot contain points with weight greater than two.

\begin{proposition}
\label{notgreaterthan2}
Each point of the $\F_q$-linear set $L_{f}$ of $\PG(1,q^{2n})$, $n\geq 2$, where
\[U_f=\{(x,f(x))\colon x\in \F_{q^{2n}}\},\]
with $f(x)=\alpha x^{\sigma}+\beta x^{\sigma q^n}$, $\sigma \colon x \mapsto x^{q^s}$, $1\leq s\leq 2n-1$,  $\gcd(s,n)=1$, and $\N_{q^{2n}/q^n}(\alpha)\ne \N_{q^{2n}/q^n}(\beta)$, has weight at most two.
\end{proposition}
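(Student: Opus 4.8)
The plan is to fix an arbitrary point of $\PG(1,q^{2n})$, express its weight as the $\F_q$-dimension of an explicit kernel, and bound that dimension by $2$, using the hypothesis $\N_{q^{2n}/q^n}(\alpha)\neq \N_{q^{2n}/q^n}(\beta)$ at exactly one decisive place. First I would dispose of the two coordinate points. The point $\langle(0,1)\rangle_{\F_{q^{2n}}}$ is not in $L_f$, and $\langle(1,0)\rangle_{\F_{q^{2n}}}$ meets $U_f$ only in the zero vector because $f$ is a bijection: $f(x)=0$ with $x\neq 0$ would give $(x^{q^n-1})^{q^s}=-\alpha/\beta$, whose $\N_{q^{2n}/q^n}$ is $1$, forcing $\N_{q^{2n}/q^n}(\alpha)=\N_{q^{2n}/q^n}(\beta)$. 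Every remaining point is $P=\langle(1,m)\rangle_{\F_{q^{2n}}}$ with $m\in\F_{q^{2n}}$, and a direct computation shows its weight equals $\dim_{\F_q}S$, where
\[ S=\{\lambda\in\F_{q^{2n}}\colon \alpha\lambda^{q^s}+\beta\lambda^{q^{s+n}}=m\lambda\}. \]

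Next I would linearise the defining relation. Applying $x\mapsto x^{q^n}$ to $\alpha\lambda^{q^s}+\beta\lambda^{q^{s+n}}=m\lambda$ and using $\lambda^{q^{2n}}=\lambda$ produces the companion relation $\beta^{q^n}\lambda^{q^s}+\alpha^{q^n}\lambda^{q^{s+n}}=m^{q^n}\lambda^{q^n}$. Viewing $\lambda^{q^s}$ and $\lambda^{q^{s+n}}$ as unknowns, these two relations form a linear system whose coefficient matrix $\left(\begin{smallmatrix}\alpha&\beta\\ \beta^{q^n}&\alpha^{q^n}\end{smallmatrix}\right)$ has determinant $\N_{q^{2n}/q^n}(\alpha)-\N_{q^{2n}/q^n}(\beta)\neq 0$; this is the single point at which the norm hypothesis enters. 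Solving for $\lambda^{q^s}$ gives $\lambda^{q^s}=A\lambda+B\lambda^{q^n}$ with $A=\alpha^{q^n}m/\bigl(\N_{q^{2n}/q^n}(\alpha)-\N_{q^{2n}/q^n}(\beta)\bigr)$ and $B=-\beta\,m^{q^n}/\bigl(\N_{q^{2n}/q^n}(\alpha)-\N_{q^{2n}/q^n}(\beta)\bigr)$. I would then check the converse by substituting back, using $\lambda^{q^{s+n}}=(\lambda^{q^s})^{q^n}$ together with the identities $\alpha A+\beta B^{q^n}=m$ and $\alpha B+\beta A^{q^n}=0$, so that $S$ is exactly the kernel of the $q$-polynomial $h(x)=x^{q^s}-Ax-Bx^{q^n}$ over $\F_{q^{2n}}$.

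Finally I would bound $\dim_{\F_q}S=\dim_{\F_q}\ker h$ by means of the associated Dickson matrix $D$. By the standard description of the kernel of a linearised polynomial, $\dim_{\F_q}\ker h$ equals the $\F_{q^{2n}}$-dimension of the solution space of the system $v_{i+s}=A^{q^i}v_i+B^{q^i}v_{i+n}$, $i\in\mathbb Z/2n\mathbb Z$ (indices taken mod $2n$). The decisive observation is that $\gcd(s,2n)\in\{1,2\}$, since $\gcd(s,n)=1$: hence the orbit of $0$ under $i\mapsto i+s$ is all of $\mathbb Z/2n\mathbb Z$ when $s$ is odd, and exactly the even residues when $s$ is even (in which case $n$ is odd). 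In both cases, starting from the single pair $(v_0,v_n)$ the recursion determines $v_{js}$ and $v_{js+n}$ for every $j$, hence all $2n$ coordinates. Thus the solution space injects $\F_{q^{2n}}$-linearly into the plane of pairs $(v_0,v_n)$ and has dimension at most $2$, giving $\dim_{\F_q}S\le 2$.

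The Frobenius manipulation and the back-substitution are routine; the step I expect to require the most care is the last one, namely invoking correctly the identity between the weight $\dim_{\F_q}\ker h$ and the $\F_{q^{2n}}$-dimension of the formal kernel of $D$, and verifying rigorously that the pair $(v_0,v_n)$ really propagates to all coordinates in both parity cases of $s$. It is worth emphasising that this argument caps the weight at $2$ precisely because the controlling object is the $\F_{q^{2n}}$-plane $\{(v_0,v_n)\}$, whereas cruder estimates (e.g.\ restricting $\bar L_f$ to the spread line corresponding to $P$) only yield the weaker bound $n$.
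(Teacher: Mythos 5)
Your proof is correct, but it takes a genuinely different route from the paper. The paper works in $\Sigma_3=\PG(3,q^n)$: it identifies the point $Q$ with a line $\ell_Q$ of the Desarguesian spread, uses that the pseudoregulus lines are exactly the lines of weight $n$ w.r.t.\ the scattered set $\bar L_f$, invokes the Blokhuis--Lavrauw bound that every plane has weight $n$ or $n+1$, and concludes by the Grassmann formula that $w(\ell_Q)\le 2$. Your argument is instead purely linear-algebraic over $\F_{q^{2n}}$: you write the weight of $\la(1,m)\ra_{\F_{q^{2n}}}$ as $\dim_{\F_q}\ker\bigl(\alpha x^{q^s}+\beta x^{q^{s+n}}-mx\bigr)$, use the norm hypothesis to invert the $2\times 2$ system obtained by applying $x\mapsto x^{q^n}$, reduce to the kernel of $h(x)=x^{q^s}-Ax-Bx^{q^n}$, and bound its Dickson-matrix nullity by $2$ via the observation that $\gcd(s,2n)\le 2$ lets the pair $(v_0,v_n)$ propagate through the whole recursion. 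All the delicate steps check out: the inclusion $S\subseteq\ker h$ is all you need for the upper bound (the back-substitution is optional), the inequality $\dim_{\F_q}\ker h\le\dim_{\F_{q^{2n}}}\ker D$ follows from the Moore-matrix independence of $(\lambda,\lambda^q,\dots,\lambda^{q^{2n-1}})$ for an $\F_q$-basis of $\ker h$, and the two parity cases of $s$ (using that $s$ even forces $n$ odd) do cover $\mathbb{Z}/2n\mathbb{Z}$. What each approach buys: the paper's proof is short because it leans on already-developed machinery for linear sets of pseudoregulus type and explains geometrically \emph{why} weight $2$ occurs (a plane of weight $n+1$ through a pseudoregulus line); yours is self-contained apart from the standard rank identity for Dickson matrices (the paper's reference \cite{WL}), makes the role of the hypothesis $\N_{q^{2n}/q^n}(\alpha)\ne\N_{q^{2n}/q^n}(\beta)$ completely explicit, and is in the same computational spirit the authors themselves adopt in Section 7 when they test scatteredness via $5\times 5$ and $7\times 7$ minors.
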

\begin{proof}
We first recall that the pseudoregulus associated with $\bar L_f$ in $\Sigma_3=\PG(3,q^n)$ consists of $q^n+1$ lines, and these are the only lines with weight $n$ w.r.t. $\bar L_f$ (\cite[Prop. 3.2]{LuMaPoTr2014}).

Let $Q:=\la(x_0,f(x_0))\ra_{\F_{q^{2n}}}$ be a point of $L_{f}$.
In $\Sigma_3$ this point corresponds to a line $\ell_{Q}$ disjoint from both $\ell_0$ and $\ell_1$ and meeting at least one line of the pseudoregulus associated with $\bar L_f$, say $m$. Note that $w_{L_f}(Q)=w_{\bar L_f}(\ell_Q)$. By \cite[Theorem 5.1]{BlLa} a plane of $\Sigma_3$ has weight either $n$ or $n+1$ w.r.t. $\bar L_f$, hence if the weight of $Q$ w.r.t. $L_f$ is greater than one, then the plane $\pi$ of $\Sigma_3$ spanned by the lines $\ell_Q$ and $m$ has weight $n+1$. Since $\ell_Q \cap m$ is a point with weight one w.r.t. $\bar L_{f}$, the Grassmann formula gives that the weight of $\ell_Q$ w.r.t $\bar L_{f}$ is two and hence the weight of $Q$ w.r.t. $L_{f}$ is two.
\end{proof}

\section{\texorpdfstring{A family of $\F_q$-linear sets of $\PG(1,q^{2n})$}{A family of Fq-linear sets of PG(1,q2n)}}
\label{sec:family}

In this section we investigate the family of $\F_q$--linear sets of $\PG(1,q^{2n})$ defined by $\F_q$--vector subspaces of form (\ref{form:Uf}). Let $U_f$ and $U_g$ be two $\F_q$--vector subspaces of $\V=\F_{q^{2n}}\times \F_{q^{2n}}$ of form (\ref{form:Uf}), where $f(x)=\alpha x^{q^s}+\beta x^{q^{s+n}}$ and $g(x)=\alpha' x^{q^s}+\beta' x^{q^{s+n}}$ , with  $1\leq s\leq 2n-1$ and $\gcd(s,n)=1$.
Since we are interested in the study of scattered linear sets of $\PG(1,q^{2n})$ not of pseudoregulus type, we can assume $\alpha\beta\ne 0$ (cf. \cite[Sec. 4]{LuMaPoTr2014}). If $\N_{q^{2n}/q^n}(\alpha\beta')= \N_{q^{2n}/q^n}(\alpha'\beta)$ then there exists $a\in\F_{q^{2n}}^*$ such that $\beta\alpha'=\beta'\alpha a^{q^s(q^n-1)}$ and direct computations show that $U_f^\varphi=U_g$, where $$\varphi\colon (x,y)\in\V\mapsto (x a,y a^{q^s} \alpha'/\alpha)\in\V.$$ From the previous arguments it follows that $L_f$ is defined, up to the action of the group $\GL(2,q^n)$, by an $\F_q$--vector subspace of $\V$ of type
\begin{equation}\label{form:Ubs}
U_{b,s}:=\{(x,bx^{q^s}+x^{q^{s+n}})\colon x\in \F_{q^{2n}}\},
\end{equation}
with $b\in\F_{q^{2n}}^*$ and $1\leq s\leq 2n-1$ such that $\N_{q^{2n}/q^n}(b)\ne 1$ and $\gcd(s,n)=1$.
We will denote by $L_{b,s}$ the corresponding $\F_q$--linear set $L_{U_{b,s}}$.

Also we can restrict our study to the choice of the integers $s$' such that $1\leq s\leq n$ and $\gcd (s,n)=1$. Indeed, by using the notation of Section 3, we have
$$U_{b,s}^{\perp'}=\{(x,b^{q^{2n-s}}x^{q^{2n-s}}+x^{q^{n-s}})\colon x\in \F_{q^{2n}}\}=U_{b^{q^{2n-s}},2n-s}$$ and it can be easily seen that $U_{b,s}$ and $U_{b,s}^{\perp'}$ are equivalent via the linear invertible map $\phi\colon (x,y)\in\V\mapsto (\alpha y, \beta x)\in\V$, where $\alpha$ is any element satisfying $\alpha^{q^n-1}=-\frac 1 {b^{q^n-1}}$ and $\beta=(b^{2q^n}\alpha^{q^n}+\alpha)^{q^{n-s}}$.

\medskip

Moreover we have the following result.

\begin{proposition}
Two $\F_q$-subspaces $U_{b,s}$ and $U_{\bar b,\bar s}$ of $\V=\F_{q^{2n}}\times \F_{q^{2n}}$ of form \eqref{form:Ubs} with $b,\bar b\in\F_{q^{2n}}^*$,  $N_{q^{2n}/q^n}(b)\ne 1$, $N_{q^{2n}/q^n}(\bar b)\ne 1$,  $1\leq s, \bar s < n$ and $\gcd(n,s)=\gcd(n,\bar s)=1$, are ${\rm \Gamma L}(2,q^{2n})$-equivalent if and only if either $$s=\bar s \mbox{\quad and
\quad} \N_{q^{2n}/q^{n}}(\bar b)=\N_{q^{2n}/q^{n}}(b)^\sigma$$ or $$s+\bar s =n \mbox{\quad and\quad} \N_{q^{2n}/q^n}(\bar b)\N_{q^{2n}/q^n}(b)^\sigma=1,$$ for some automorphism $\sigma\in Aut(\F_{q^n})$.
\end{proposition}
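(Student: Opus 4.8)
The plan is to prove the two implications separately: for the ``if'' part I construct explicit equivalences, and for the ``only if'' part I use the geometry of the associated pseudoregulus in $\Sigma_3=\PG(3,q^n)$ to force the shape of an arbitrary equivalence. Throughout write $\N:=\N_{q^{2n}/q^n}$ and recall that every $\sigma\in\mathrm{Aut}(\F_{q^n})$ extends to some $\theta\in\mathrm{Aut}(\F_{q^{2n}})$, which commutes with the $q$-power Frobenius. For sufficiency when $s=\bar s$, I would take the map $\varphi\colon(x,y)\mapsto(x^\theta a,\,y^\theta a^{q^{s+n}})$ of $\Gamma\mathrm L(2,q^{2n})$ and substitute: a direct computation gives $U_{b,s}^\varphi=U_{b',s}$ with $b'=b^\theta a^{q^s(q^n-1)}$, and since $\N(a^{q^s(q^n-1)})=a^{q^s(q^{2n}-1)}=1$ we get $\N(b')=\N(b)^\theta=\N(b)^\sigma$. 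Conversely, if $\N(\bar b)=\N(b)^\sigma$ then $\bar b/b^\theta$ lies in the kernel of $\N$, which is precisely the image of $a\mapsto a^{q^s(q^n-1)}$, so a suitable $a$ exists and $U_{b,s}^\varphi=U_{\bar b,s}$.

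For sufficiency when $s+\bar s=n$, I would build on the equivalence $U_{b,s}\cong U_{b,s}^{\perp'}=U_{b^{q^{2n-s}},2n-s}$ recorded before the statement, followed by the normalisation $U_{b^{q^{2n-s}},2n-s}\cong U_{b^{-q^{2n-s}},n-s}$ obtained by scaling the second coordinate. This produces a linear equivalence $\psi\colon U_{b,s}\to U_{b^{-q^{2n-s}},n-s}$, whose parameter satisfies $\N(b^{-q^{2n-s}})=\N(b)^{-q^{n-s}}$. Composing $\psi$ with the $s=\bar s$ equivalences of the previous paragraph then realises every target $U_{\bar b,n-s}$ with $\N(\bar b)\N(b)^\sigma=1$, giving the second family.

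For necessity, suppose $\varphi\in\Gamma\mathrm L(2,q^{2n})$ satisfies $U_{b,s}^\varphi=U_{\bar b,\bar s}$. Regarding $\V$ as a $4$-dimensional $\F_{q^n}$-space, $\varphi$ is also $\F_{q^n}$-semilinear, hence induces a collineation of $\Sigma_3$ carrying $\bar L_{b,s}$ onto $\bar L_{\bar b,\bar s}$. Both are linear sets of pseudoregulus type with the same transversal lines $\ell_0,\ell_1$, and for $n>2$ these are uniquely determined by the linear set, so $\varphi$, mapping the transversals of $\bar L_{b,s}$ to those of $\bar L_{\bar b,\bar s}$, preserves the pair $\{\ell_0,\ell_1\}$. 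In $\Sigma_1=\PG(1,q^{2n})$ this means $\varphi$ fixes or interchanges $P_0=\la(1,0)\ra$ and $P_\infty=\la(0,1)\ra$; thus $\varphi$ is either diagonal, $(x,y)\mapsto(Ax^\theta,Dy^\theta)$, or anti-diagonal, $(x,y)\mapsto(By^\theta,Cx^\theta)$, for some $\theta\in\mathrm{Aut}(\F_{q^{2n}})$.

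In the diagonal case, substituting into $U_{b,s}$ and using that the resulting two-term $q$-polynomial must coincide with $\bar b X^{q^{\bar s}}+X^{q^{\bar s+n}}$, the constraints $1\le s,\bar s<n$ force the exponent sets $\{s,s+n\}$ and $\{\bar s,\bar s+n\}$ in $\mathbb Z/2n$ to match, hence $s=\bar s$; normalising the top coefficient gives $D=A^{q^{s+n}}$ and then $\bar b=b^\theta A^{q^s(q^n-1)}$, so $\N(\bar b)=\N(b)^\sigma$ with $\sigma=\theta|_{\F_{q^n}}$, the first alternative. In the anti-diagonal case I would avoid a direct inversion and instead note that $\varphi\circ\psi^{-1}$ (with $\psi$ the anti-diagonal equivalence from the sufficiency part) is diagonal; applying the diagonal analysis to it yields $\bar s=n-s$ and $\N(\bar b)=\N(b^{-q^{2n-s}})^{\sigma'}$, i.e.\ $\N(\bar b)\N(b)^\sigma=1$ once the fixed exponent $q^{2n-s}$ is absorbed into the automorphism. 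The main obstacle is the opening move of the necessity argument: the reduction to diagonal or anti-diagonal maps rests on the uniqueness of the transversal lines, which holds only for $n>2$, so the degenerate case $n=2$ (where $s=\bar s=1$ is forced and the two alternatives collapse) must be settled by a separate direct computation; everything else reduces to careful bookkeeping of automorphisms and of exponents modulo $n$ in the norm identities.
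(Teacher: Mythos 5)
Your proposal is correct, but it takes a genuinely different route from the paper's. The paper proves both directions at once by brute force: it writes an arbitrary element of $\Gamma\mathrm{L}(2,q^{2n})$ as a matrix $\left(\begin{smallmatrix}\alpha&\beta\\ \gamma&\delta\end{smallmatrix}\right)$ twisted by $\sigma$, derives a single polynomial identity in $z$, and compares coefficients of the monomials $z^{q^i}$ modulo $z^{q^{2n}}-z$; the pairwise distinctness of the relevant exponents is what forces the map to be diagonal ($\beta=\gamma=0$) when $s=\bar s$ and anti-diagonal ($\alpha=\delta=0$) when $s\ne\bar s$, and the residual systems are then solvable exactly under the stated norm conditions. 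You instead obtain the diagonal/anti-diagonal dichotomy geometrically, from the uniqueness of the transversal lines of the associated pseudoregulus in $\PG(3,q^n)$, and you handle sufficiency by exhibiting explicit equivalences (using that $\ker\N_{q^{2n}/q^n}$ is exactly the image of $a\mapsto a^{q^s(q^n-1)}$, and the $\perp'$-duality already recorded before the statement); both computations check out, including the bookkeeping $\N(b^{-q^{2n-s}})=\N(b)^{-q^{n-s}}$ and the absorption of the fixed Frobenius power into $\sigma$. Your route is shorter and more conceptual and makes the equivalences explicit, at the cost of importing the uniqueness of transversals from \cite{LuMaPoTr2014}, which fails for $n=2$ --- a gap you correctly flag but do not close. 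That caveat is minor in context: for $n=2$ the two alternatives of the statement coincide ($s=\bar s=1=n-\bar s$), the paper treats $n=2$ separately anyway, and indeed the paper's own argument tacitly assumes $n>2$ as well, since for $n=2$, $s=1$ the listed monomials $z$ and $z^{q^{n+2s}}$ coincide modulo $z^{q^{2n}}-z$. For $n>2$, which is the case of interest throughout the paper, your proof is complete.
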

\begin{proof}
$U_{b,s}$ and $U_{\bar b,\bar s}$ are ${\rm \Gamma L}(2,q^{2n})$-equivalent if and only if there exist elements $\alpha,\beta,\gamma,\delta \in\F_{q^{2n}}$, with $\alpha\delta\ne
\beta\gamma$ and an automorphism $\sigma\in Aut(\F_{q^{2n}})$ such that
$$
\forall \,x \in \F_{q^{2n}}, \exists \,y\in\F_{q^{2n}}:\quad \left(
  \begin{array}{cc}
    \alpha & \beta \\
    \gamma & \delta \\
  \end{array}
\right)\left(
         \begin{array}{c}
           x^\sigma \\
           (bx^{q^s}+x^{q^{s+n}})^{\sigma} \\
         \end{array}
       \right)=\left(
                 \begin{array}{c}
                   y \\
                   \bar by^{q^{\bar s}}+y^{q^{\bar s+n}} \\
                 \end{array}
               \right).$$
Put $z:=x^\sigma$, the last equation implies that for each
$z\in\F_{q^{2n}}$, there exists $y\in\F_{q^{2n}}$
such that
\begin{equation}\label{form-new-1}
\left\{\begin{array}{ll}
\alpha z+\beta (b^\sigma z^{q^s}+z^{q^{n+s}})= y,\\
\gamma z+\delta (b^\sigma z^{q^s}+z^{q^{n+s}})=\bar b y^{q^{\bar s}}+y^{q^{n+\bar s}}.
\end{array}\right.
\end{equation}
Putting the first in the second equation of System (\ref{form-new-1}), we get that
\begin{equation}\label{form-new-3}\gamma z+\delta (b^\sigma z^{q^s}+z^{q^{n+s}})=\bar b (\alpha z+\beta (b^\sigma z^{q^s}+z^{q^{n+s}}))^{q^{\bar s}}+(\alpha z+\beta (b^\sigma z^{q^s}+z^{q^{n+s}}))^{q^{n+\bar s}}\end{equation} for each $z\in\F_{q^{2n}}$.

If $s=\bar s$, since the monomials $z, z^{q^s}, z^{q^{2s}}, z^{q^{n+s}}, z^{q^{n+2s}}$ are pairwise distinct modulo $z^{q^{2n}}-z$, from the previous polynomial identity we get
\begin{equation}\label{form-new-2}
\left\{\begin{array}{ll}
\gamma=0\\
\delta b^\sigma=\bar b\alpha^{q^s}\\
\delta=\alpha^{q^{n+s}}\\
\bar b \beta^{q^s}b^{\sigma q^s}+\beta^{q^{n+s}}=0\\
\bar b\beta^{q^s}+\beta^{q^{n+s}}b^{\sigma q^{n+s}}=0.
\end{array}\right.
\end{equation}
Since $\N_{q^{2n}/q^{n}}(b)\ne 1$, System (\ref{form-new-2}) is equivalent to
\[
\left\{\begin{array}{ll}
\gamma=0\\
\beta=0\\
\delta b^\sigma=\bar b\alpha^{q^s}\\
\delta=\alpha^{q^{n+s}},\\
\end{array}\right.
\]
which admits solutions if and only if $\N_{q^{2n}/q^{n}}(\bar b)=\N_{q^{2n}/q^{n}}(b)^\sigma$, with $\sigma\in Aut(\F_{q^n})$.

If $s\neq \bar s$, since $1\leq s,\bar s< n$ and $\gcd (s,n)=\gcd(\bar s,n)=1$, we get
\[\{z^{q^s},z^{q^{\bar s}}\}\cap\{z,z^{q^{n+s}},z^{q^{n+ \bar s}},z^{q^{s+\bar s}},z^{q^{n+s+\bar s}}\}=\emptyset\]
modulo $z^{q^{2n}}-z$. Hence polynomial identity (\ref{form-new-3}) yields $\alpha=\delta=0$ and Equation (\ref{form-new-3}) becomes
\[
\gamma z=(\bar b \beta^{q^{\bar s}}b^{\sigma q^{\bar s}}+ \beta^{q^{n+\bar s}})z^{q^{s+\bar s}}+(\bar b \beta^{q^{\bar s}}+\beta^{q^{n+\bar s}} b^{\sigma q^{n+\bar s}})z^{q^{n+s+\bar s}}
\]
for each $z\in\F_{q^{2n}}$.
Also, since $s+\bar s<2n$, the monomials $z$ and $z^{q^{s+\bar s}}$ are different modulo $z^{q^{2n}}-z$. Hence, if $s+\bar s\ne n$ we immediately get $\gamma=0$, a contradiction. It follows that  $s+\bar s=n$ and comparing the coefficients of the terms of degree $1$ and $q^{s+\bar s}$ we get
\[
\left\{\begin{array}{ll}
\gamma=\bar b \beta^{q^{\bar s}}+\beta^{q^{n+\bar s}} b^{\sigma q^{n+\bar s}}\\
\bar b \beta^{q^{\bar s}}b^{\sigma q^{\bar s}}+ \beta^{q^{n+\bar s}}=0,\\
\end{array}\right.
\]
which admits solutions if and only if $\N_{q^{2n}/q^n}(\bar b b^{\sigma q^{\bar s}})=1$, i.e. if and only if
$\N_{q^{2n}/q^n}(\bar b)\N_{q^{2n}/q^n}(b^{q^{\bar s}})^\sigma=1$, for some automorphism $\sigma\in Aut(\F_{q^n})$.
\end{proof}
\bigskip

We finish this section by determining the linear automorphism group of $U_{b,s}$ and with some results on the geometric structure of a linear set $L_{b,s}$.

\begin{corollary}\label{prop:lingroupUbs}
The $\F_{q^{2n}}$-linear automorphism group $\cG_{b,s}$ of an $\F_q$--vector subspace $U_{b,s}$ of $\V=\F_{q^{2n}}\times \F_{q^{2n}}$ of form (\ref{form:Ubs}) consists of the following matrices
$$\left(
    \begin{array}{cc}
      \alpha & 0 \\
      0 & \alpha^{q^s} \\
    \end{array}
  \right),
$$ with $\alpha\in\F_{q^n}^*.$
\end{corollary}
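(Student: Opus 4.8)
The plan is to specialize the computation of the preceding proposition to the case of a linear (rather than merely semilinear) stabilizer of a \emph{single} space, i.e.\ to companion automorphism $\sigma=\mathrm{id}$ together with $(\bar b,\bar s)=(b,s)$; this is legitimate because $\F_{q^{2n}}$-linearity forces $\sigma=\mathrm{id}$. Concretely, an element of $\cG_{b,s}$ is an invertible matrix $\left(\begin{smallmatrix}\alpha&\beta\\\gamma&\delta\end{smallmatrix}\right)$ over $\F_{q^{2n}}$ stabilizing $U_{b,s}$, and writing out this stabilizer condition reproduces System~(\ref{form-new-1}) and hence the polynomial identity~(\ref{form-new-3}) with $\sigma=\mathrm{id}$, $\bar b=b$, $\bar s=s$. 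Since $s=\bar s$ here, comparing coefficients exactly as in the preceding proposition yields System~(\ref{form-new-2}) under these substitutions.

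Next I would solve this specialized system. The coefficient of the linear term forces $\gamma=0$. The equations $\delta b^{\sigma}=\bar b\alpha^{q^s}$ and $\delta=\alpha^{q^{n+s}}$ become $\delta b=b\alpha^{q^s}$ and $\delta=\alpha^{q^{n+s}}$; as $b\ne 0$ the first gives $\delta=\alpha^{q^s}$, and comparison with the second gives $\alpha^{q^s}=\alpha^{q^{n+s}}$. Raising this to the power $q^{2n-s}$ yields $\alpha^{q^n}=\alpha$, that is $\alpha\in\F_{q^n}^*$, and then $\delta=\alpha^{q^s}$. It only remains to treat $\beta$.

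The crux, and the only genuine obstacle, is to force $\beta=0$; this is precisely the reduction of System~(\ref{form-new-2}) invoked in the preceding proposition, which I would carry out explicitly here. The two remaining equations read $b^{1+q^s}\beta^{q^s}+\beta^{q^{n+s}}=0$ and $b\beta^{q^s}+b^{q^{n+s}}\beta^{q^{n+s}}=0$. Eliminating $\beta^{q^{n+s}}$ between them gives $\beta^{q^s}\bigl(b-b^{1+q^s+q^{n+s}}\bigr)=0$, so either $\beta=0$ or $b^{q^s+q^{n+s}}=1$. The latter equality is $\N_{q^{2n}/q^n}(b)^{q^s}=1$, hence $\N_{q^{2n}/q^n}(b)=1$, contradicting the standing hypothesis $\N_{q^{2n}/q^n}(b)\ne 1$. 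Therefore $\beta=0$.

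Finally I would check the converse inclusion, namely that each matrix of the asserted form does stabilize $U_{b,s}$: for $\alpha\in\F_{q^n}^*$ one has $\alpha^{q^{s+n}}=(\alpha^{q^n})^{q^s}=\alpha^{q^s}$, so the image of $(x,\,bx^{q^s}+x^{q^{s+n}})$ is $(\alpha x,\,\alpha^{q^s}(bx^{q^s}+x^{q^{s+n}}))=(x',\,bx'^{q^s}+x'^{q^{s+n}})$ with $x'=\alpha x$. Together with the forward direction this identifies $\cG_{b,s}$ with the group of diagonal matrices $\left(\begin{smallmatrix}\alpha&0\\0&\alpha^{q^s}\end{smallmatrix}\right)$, $\alpha\in\F_{q^n}^*$, completing the proof.
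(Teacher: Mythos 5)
Your proposal is correct and follows essentially the same route as the paper: both specialize the system of the preceding proposition to $\sigma=\mathrm{id}$, $\bar b=b$, $\bar s=s$ and read off $\gamma=\beta=0$, $\delta=\alpha^{q^s}=\alpha^{q^{n+s}}$, whence $\alpha\in\F_{q^n}^*$. You merely make explicit two steps the paper leaves implicit — the elimination showing $\beta=0$ via $\N_{q^{2n}/q^n}(b)\neq 1$ (which the paper performs inside the proof of the preceding proposition) and the verification that the diagonal matrices do stabilize $U_{b,s}$ — both of which are carried out correctly.
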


\begin{proof}
In the previous theorem choosing $s=\bar s$ and $b= \bar b$, by System (\ref{form-new-2}) we get $\beta=\gamma=0$ and $\delta=\alpha^{q^s}=\alpha^{q^{n+s}}$. The assertion follows.
\end{proof}

The previous corollary allows us to prove the following result.

\begin{proposition}\label{prop:geomproper}
Let $L_{b,s}$ be the $\F_q$--linear set of $\PG(1,q^{2n})$ of rank $2n$ defined by an $\F_q$--vector subspace $U_{b,s}$ of type \eqref{form:Ubs} and let
$P\cG_{b,s}$ be the projectivity group induced on the line $\PG(1,q^{2n})$ by $\cG_{b,s}$. Then the following properties hold:
\begin{enumerate}[i)]
\item the linear collineation group $P\cG_{b,s}$ preserves $L_{b,s}$, it has order $\frac{q^n-1}{q-1}$, fixes the two points $\la (1,0)\ra_{\F_{q^{2n}}}$ and $\la (0,1)\ra_{\F_{q^{2n}}}$ and any other point--orbit has size $\frac{q^n-1}{q-1}$;
\item $L_{b,s}$ is a union of orbits of points under the $P\cG_{b,s}$--action;
\item all points of  $L_{b,s}$ belonging to the same $P\cG_{b,s}$--orbit have the same weight w.r.t. $L_{b,s}$.
\end{enumerate}
\end{proposition}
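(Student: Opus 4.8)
The plan is to exploit the fully explicit description of $\cG_{b,s}$ furnished by Corollary~\ref{prop:lingroupUbs}: every element is a diagonal matrix of the form $\mathrm{diag}(\alpha,\alpha^{q^s})$ with $\alpha\in\F_{q^n}^*$. The assignment $\alpha\mapsto\mathrm{diag}(\alpha,\alpha^{q^s})$ is an injective group homomorphism, so $\cG_{b,s}\cong\F_{q^n}^*$ has order $q^n-1$. To pass to $P\cG_{b,s}$ I would first determine the kernel of the natural projection $\cG_{b,s}\to\PGL(2,q^{2n})$, i.e. the scalar matrices lying in $\cG_{b,s}$. Now $\mathrm{diag}(\alpha,\alpha^{q^s})$ is scalar precisely when $\alpha=\alpha^{q^s}$, that is $\alpha\in\F_{q^s}\cap\F_{q^n}=\F_{q^{\gcd(s,n)}}=\F_q$, using the hypothesis $\gcd(s,n)=1$. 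Hence the kernel has order $q-1$ and $|P\cG_{b,s}|=(q^n-1)/(q-1)$, the order asserted in i). That $P\cG_{b,s}$ preserves $L_{b,s}$ is immediate, since each element of $\cG_{b,s}$ stabilises $U_{b,s}$ and thus the induced projectivities stabilise $L_{U_{b,s}}=L_{b,s}$; and the two points $\la(1,0)\ra_{\F_{q^{2n}}}$, $\la(0,1)\ra_{\F_{q^{2n}}}$ are fixed by diagonality.

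For the orbit sizes in i) I would compute the stabiliser of an arbitrary point $P=\la(u,v)\ra_{\F_{q^{2n}}}$ with $uv\neq0$. The element $\mathrm{diag}(\alpha,\alpha^{q^s})$ fixes $P$ if and only if $(\alpha u,\alpha^{q^s}v)$ is $\F_{q^{2n}}$-proportional to $(u,v)$, which forces $\alpha=\alpha^{q^s}$, hence $\alpha\in\F_q^*$; but these are exactly the scalars, acting trivially on the projective line. Therefore $P\cG_{b,s}$ acts freely on the remaining $q^{2n}-1$ points, and each such orbit has length $|P\cG_{b,s}|=(q^n-1)/(q-1)$, completing i). (As a consistency check, $(q^n-1)/(q-1)$ divides $q^{2n}-1=(q^n-1)(q^n+1)$, so the partition into orbits is well defined.)

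Statement ii) is then a formal consequence of i): any group preserving a point set partitions that set into its own orbits, so $L_{b,s}$ is a union of $P\cG_{b,s}$-orbits. For iii) the key observation is that weight is a linear-algebraic invariant preserved by the action. Writing $W_P$ for the $1$-dimensional $\F_{q^{2n}}$-subspace (equivalently, the $2n$-dimensional $\F_q$-subspace) defining $P$, one has $w_{L_{b,s}}(P)=\dim_{\F_q}(U_{b,s}\cap W_P)$. For $g\in\cG_{b,s}$, since $g$ is an $\F_q$-linear bijection with $g(U_{b,s})=U_{b,s}$ and $g(W_P)=W_{P^g}$, I would argue
\[
\dim_{\F_q}\bigl(U_{b,s}\cap W_{P^g}\bigr)=\dim_{\F_q}\bigl(g(U_{b,s})\cap g(W_P)\bigr)=\dim_{\F_q}\bigl(g(U_{b,s}\cap W_P)\bigr)=\dim_{\F_q}\bigl(U_{b,s}\cap W_P\bigr),
\]
so $P$ and $P^g$ have the same weight, which yields iii).

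The computations are all routine; the only point demanding care is the stabiliser calculation underpinning the orbit lengths in i), where one must check that no nonscalar diagonal element of $\cG_{b,s}$ fixes a point with both coordinates nonzero. This is precisely where the coprimality $\gcd(s,n)=1$ enters, forcing the relevant fixed field to collapse to $\F_q$ and thereby identifying the nontrivial stabiliser elements with the scalars.
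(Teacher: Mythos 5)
Your proposal is correct and follows essentially the same route as the paper: both rely on the explicit diagonal form of $\cG_{b,s}$ from Corollary~\ref{prop:lingroupUbs}, identify the scalars via $\alpha=\alpha^{q^s}$ and $\gcd(s,n)=1$ to get the order $(q^n-1)/(q-1)$ and the free action off the two fixed points, and prove iii) by the same dimension-preservation computation $\dim_{\F_q}(U_{b,s}\cap W_P)=\dim_{\F_q}(g(U_{b,s})\cap g(W_P))$. The only difference is cosmetic: you spell out the stabiliser calculation that the paper dismisses as ``easily seen.''
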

\begin{proof}
  Let $\phi_\lambda$ be the linear collineation of $P\cG_{b,s}$ induced by the element
$  \varphi_\lambda:=\left(
    \begin{array}{cc}
      \lambda & 0 \\
      0 & \lambda^{q^s} \\
    \end{array}
  \right) \in \cG_{b,s}
$, with $\lambda\in \F_{q^n}^*$.
   Since ${\rm Fix}(\sigma)\cap \F_{q^n}^*=\F_q$, the group $P\cG_{b,s}$ has order $\frac{q^n-1}{q-1}$. Also, it can be easily seen that if $P$ is a point of $\PG(1,q^{2n})$ different from $\la (1,0)\ra_{\F_{q^{2n}}}$ and $\la (0,1)\ra_{\F_{q^{2n}}}$, then $P^{\phi_\lambda}=P$ if and only if $\phi_\lambda$ is the identity map. Hence Statements $i)$ and $ii)$ follow.

 Let now $P=\la (x_0,f(x_0))\ra_{\F_{q^{2n}}}$ be a point of $L_{b,s}$, i.e. $f(x_0)=bx_0^{q^s}+x_0^{q^{n+s}}$.
 Then $P^{\phi_\lambda}=\la (\lambda x_0,f(\lambda x_0))\ra_{\F_{q^{2n}}}$ and
 \[w_{L_{b,s}}(P)=\dim_q(\la (x_0,f(x_0))\ra_{\F_{q^{2n}}}\cap U_{b,s})=\dim_q\varphi_\lambda(\la (x_0,f(x_0))\ra_{\F_{q^{2n}}}\cap U_{b,s})\]
\[=\dim_q\Big(\la (\lambda x_0,f(\lambda x_0))\ra_{\F_{q^{2n}}}\cap \varphi_\lambda(U_{b,s})\Big)\]
\[=\dim_q\Big(\la (\lambda x_0,f(\lambda x_0)\ra_{\F_{q^{2n}}}\cap U_{b,s}\Big)=w_{L_{b,s}}(P^{\phi_\lambda}),\]
 and Property $iii)$ is proved.
\end{proof}

From the previous proposition we get the following result.

\begin{corollary}
Let $L_{b,s}$ be the $\F_q$--linear set of $\PG(1,q^{2n})$ of rank $2n$ defined by an $\F_q$--vector subspace $U_{b,s}$ of type (\ref{form:Ubs}). The size of $L_{b,s}$ is a multiple of $\frac{q^n-1}{q-1}$. Furthermore, the set of points of weight 2 w.r.t. $L_{b,s}$  is a union of orbits under the action of the linear collineation group $P\cG_{b,s}$. \qed
\end{corollary}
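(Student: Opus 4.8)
The plan is to deduce both assertions directly from Proposition \ref{prop:geomproper}, whose three parts already supply the mechanism; the only real content to add is to locate the two fixed points of $P\cG_{b,s}$ relative to $L_{b,s}$.

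First I would verify that neither fixed point $\la(1,0)\ra_{\F_{q^{2n}}}$ nor $\la(0,1)\ra_{\F_{q^{2n}}}$ belongs to $L_{b,s}$. The point $\la(0,1)\ra_{\F_{q^{2n}}}$ is immediate, since every point of $L_{b,s}$ has the form $\la(x,f(x))\ra_{\F_{q^{2n}}}$ with $x\neq 0$, so its first coordinate is nonzero. For $\la(1,0)\ra_{\F_{q^{2n}}}$ I would ask when $f(x)=bx^{q^s}+x^{q^{s+n}}=0$ admits a nonzero solution; dividing by $x^{q^s}$ yields $b=-(x^{q^s})^{q^n-1}$, so that $\N_{q^{2n}/q^n}(b)=(-1)^{q^n+1}=1$, since $(x^{q^s})^{(q^n-1)(q^n+1)}=1$ and the sign is $1$ in both characteristics (trivially for $q$ even, and because $q^n+1$ is even for $q$ odd). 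This contradicts the standing hypothesis $\N_{q^{2n}/q^n}(b)\neq 1$, so $\la(1,0)\ra_{\F_{q^{2n}}}\notin L_{b,s}$ as well.

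With both fixed points excluded, Proposition \ref{prop:geomproper}(i)--(ii) settles the first statement: $L_{b,s}$ is a union of $P\cG_{b,s}$-orbits, and the only orbits of size other than $\frac{q^n-1}{q-1}$ are the two singleton fixed points, which we have just shown do not lie in $L_{b,s}$. Hence every orbit comprising $L_{b,s}$ has size exactly $\frac{q^n-1}{q-1}$, and therefore $|L_{b,s}|$ is a multiple of $\frac{q^n-1}{q-1}$.

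For the second statement I would invoke Proposition \ref{prop:geomproper}(iii): points in a common $P\cG_{b,s}$-orbit share their weight with respect to $L_{b,s}$. Consequently, if $P$ has weight $2$ then so does every point of its orbit, i.e.\ the whole orbit lies in the weight-$2$ locus (which by Proposition \ref{notgreaterthan2} is precisely the set of maximal-weight points); taking the union over all weight-$2$ points exhibits this locus as a union of $P\cG_{b,s}$-orbits. The only mild obstacle in the whole argument is the norm computation ruling out $\la(1,0)\ra_{\F_{q^{2n}}}$; once that is in place, both claims are formal orbit-counting consequences of the preceding proposition.
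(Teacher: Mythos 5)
Your proof is correct and takes essentially the same route the paper intends: the corollary is presented as an immediate consequence of Proposition \ref{prop:geomproper}, with no separate argument given. Your explicit check that the two fixed points $\la(1,0)\ra_{\F_{q^{2n}}}$ and $\la(0,1)\ra_{\F_{q^{2n}}}$ do not lie on $L_{b,s}$ (via the computation showing $f(x)=0$ for $x\neq 0$ would force $\N_{q^{2n}/q^n}(b)=1$) supplies a detail the paper leaves implicit but which is genuinely needed for the divisibility claim.
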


\section{\texorpdfstring{Scattered $\F_q$-subspaces of type $U_{b,s}$ and the corresponding MRD-codes}{Scattered Fq-linear sets Lbs of PG(1,q2n) and the corresponding MRD--codes}}
\label{sec:MRD}

The set of $m \times n$ matrices $\F_q^{m\times n}$ over $\F_q$ is a rank metric $\F_q$-space
with rank metric distance defined by $d(A,B) = rk\,(A-B)$ for $A,B \in \F_q^{m\times n}$.
A subset $\cC \subseteq \F_q^{m\times n}$ is called a rank distance code (RD-code for short). The minimum distance of $\cC$ is
\[d(C) = \min_{{A,B \in \cC},\ {A\ne B}} \{ d(A,B) \}.\]

In \cite{Delsarte} the Singleton bound for an $m\times n$ rank metric code $\cC$ with minimum rank distance $d$ was proved:
\begin{equation}
\label{Singleton}
\#\cC \leq q^{\max \{m,n\}(\min \{m,n\}-d+1)}.
\end{equation}
If this bound is achieved, then $\cC$ is an MRD-code.
MRD-codes have various applications in communications and cryptography; see for instance \cite{Gabidulin, koetter_coding_2008}. More properties of MRD-codes can be found in \cite{Delsarte,Gabidulin,gadouleau_properties_2006,morrison_equivalence_2013}.

When $\cC$ is an $\F_q$-linear subspace of $\F_q^{m\times n}$, we say that $\cC$ is an $\F_q$-linear code and the
dimension $\dim_q (\cC)$ is defined to be the dimension of $\cC$ as a subspace over $\F_q$.
If $d$ is the minimum distance of $\cC$ we say that $\cC$ has parameters $(m,n,q;d)$.

The \emph{middle nucleus} of a code $\cC \subseteq \F_q^{m\times n}$ (cf. \cite{LTZ2}, or \cite{LN2016} where the term \emph{left idealiser} was used), is defined as
\[\cN(\cC):=\{Z\in \F_q^{m\times m} \colon ZC \in \cC \text{ for all } C\in \cC\},\]
and by \cite[Theorem 5.4]{LTZ2} it turns out to be a field of order at least $q$.

We will use the following equivalence definition for codes of $\F_q^{m \times m}$. If $\cC$ and $\cC'$ are two codes then they are equivalent if and only if
there exist two invertible matrices $A,B \in \F_q^{m \times m}$ and a field automorphism $\sigma$ such that
$\{A C^\sigma B \colon C\in \cC\}=\cC'$, or $\{A C^{T\sigma}B \colon C\in \cC\}=\cC'$, where $T$ denotes transposition.
The code $\cC^T$ is also called the \emph{adjoint} of $\cC$.

\medskip

In \cite[Section 5]{Sh} Sheekey showed that scattered $\F_q$-linear sets of $\PG(1,q^m)$ of rank $m$ yield $\F_q$-linear MRD-codes with parameters $(m,m,q;m-1)$.
We briefly recall here the construction from $\cite{Sh}$. Let $U_f$ be a maximum scattered $\F_q$--vector subspace, defined as above.
Then, after fixing an $\F_q$-bases for $\F_{q^m}$, the set of $\F_q$-linear maps of $\F_{q^m}$ 
\begin{equation}
\label{Cf}
\cC_f:=\{x\mapsto af(x)+bx \colon a,b \in \F_{q^m}\}
\end{equation}
corresponds to $m\times m$ matrices over $\F_q$ forming an $\F_q$-linear MRD-code with parameters $(m,m,q;m-1)$. Also, since $\cC_f$ is an $\F_{q^m}$-subspace of $End(\F_{q^m},\F_q)$, its middle nucleus $\cN(\cC_f)$ contains the set of scalar maps $\cF_m:=\{x\in\F_{q^m}\mapsto \alpha x\in\F_{q^m}\colon \alpha\in\F_{q^m}\}$, i.e. $|\cN(\cC_f)|\geq q^m$.

On the other hand  $\cN(\cC_f)$ is an $\F_q$-subspace of invertible maps together with the zero map (cf. \cite[Corollary 5.6]{LTZ2}), it is also an MRD-code with parameters $(m,m,q;m)$. Then \eqref{Singleton} gives $|\cN(\cC_f)|\leq q^m$, thus $\cN(\cC_f)=\cF_m$.

\medskip

Regarding the converse we can state the following.

\begin{proposition}\label{prop:nuclei}
If $\cC$ is an MRD-code with parameters $(m,m,q;m-1)$ and with middle nucleus isomorphic to $\F_{q^m}$, then $\cC$ is equivalent to some code $\cC_f$ (cf. \eqref{Cf}). 
\end{proposition}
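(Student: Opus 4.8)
The plan is to exploit the fact that the middle nucleus gives a distinguished field action on $\cC$, and to use that action to put $\cC$ into the canonical ``graph of a linear map'' form. First I would identify $\F_q^{m\times m}$ with $\mathrm{End}(\F_{q^m},\F_q)$, the $\F_q$-linear maps of $\F_{q^m}$, by fixing an $\F_q$-basis; every such map is a $q$-polynomial $\sum_{i=0}^{m-1} c_i x^{q^i}$. Under this identification the hypothesis says $\cN(\cC)\cong \F_{q^m}$, and since by the cited results $\cN(\cC)$ is a field of invertible maps (plus $0$) of cardinality exactly $q^m$, it is a copy of $\F_{q^m}$ sitting inside $\mathrm{End}(\F_{q^m},\F_q)$ as a field. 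The first key step is therefore to normalise this nucleus: any two copies of $\F_{q^m}$ as a subfield of $\mathrm{End}(\F_{q^m},\F_q)$ of this size are conjugate, so after composing $\cC$ with a suitable invertible $A$ (and possibly a field automorphism) I may assume $\cN(\cC)$ is precisely the field of scalar maps $\cF_m=\{x\mapsto \alpha x\colon \alpha\in\F_{q^m}\}$. This reduction is allowed because pre/post-composition by invertibles is exactly the equivalence we are working with, and it transports the middle nucleus to a conjugate field.

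Once $\cN(\cC)=\cF_m$, the defining property $Z C\in\cC$ for all $Z\in\cF_m$, $C\in\cC$ says that $\cC$ is closed under left multiplication by scalar maps, i.e. $\cC$ is an $\F_{q^m}$-subspace of $\mathrm{End}(\F_{q^m},\F_q)$ under the left-scalar action $\alpha\cdot C = (x\mapsto \alpha\, C(x))$. The second key step is a dimension count. As an MRD-code with parameters $(m,m,q;m-1)$, the Singleton bound \eqref{Singleton} is met with $d=m-1$, so $\#\cC=q^{m\cdot 2}=q^{2m}$, i.e. $\dim_{\F_q}\cC=2m$, which makes $\dim_{\F_{q^m}}\cC=2$. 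Thus $\cC$ is spanned over $\F_{q^m}$ by two maps, and I would choose the identity map $x\mapsto x$ as one basis element: since $\cC$ has minimum distance $m-1$, it contains an invertible map, and after a further left-scalar normalisation I can take that invertible element to be the identity (its inverse lies in $\cN(\cC)=\cF_m$ only in the scalar case, but composing $\cC$ on the right by the inverse of an invertible element of $\cC$ sends that element to the identity while preserving the structure). Writing the second basis element as $f$, every element of $\cC$ is of the form $a f + b\,\mathrm{id}$ with $a,b\in\F_{q^m}$, which is exactly $\cC_f$ in \eqref{Cf}.

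The final step is to verify that this $f$ is a \emph{scattered} $q$-polynomial, so that $\cC_f$ is indeed one of the codes in the stated family. This follows from the MRD property in the reverse direction: the map $af+b\,\mathrm{id}$ is non-invertible for some $(a,b)\neq(0,0)$ precisely when $f(x)/x$ takes the value $-b/a$ on a two-dimensional $\F_q$-subspace, and the rank condition $d=m-1$ forces the kernel of every nonzero element to have $\F_q$-dimension at most $1$; equivalently, by Lemma \ref{lem2} and the weight interpretation, each point of $L_f$ has weight $1$, i.e. $U_f$ is scattered. I expect the main obstacle to be the \emph{normalisation of the nucleus}: justifying cleanly that one may conjugate an arbitrary field copy $\cN(\cC)\cong\F_{q^m}$ inside $\mathrm{End}(\F_{q^m},\F_q)$ onto the standard scalar field $\cF_m$, using only the allowed equivalences (and tracking whether a transpose/adjoint or a field automorphism $\sigma$ is needed). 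The arithmetic of $q$-polynomials and the dimension counts are routine once this identification is in place, but the conjugacy of the two embeddings—essentially a Skolem--Noether type statement for the regular representation of $\F_{q^m}$—is where the real content lies.
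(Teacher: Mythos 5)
Your proposal is correct and follows essentially the same route as the paper: normalise the middle nucleus onto the scalar field $\cF_m$ by conjugacy (the paper phrases your ``Skolem--Noether type'' step as the conjugacy of Singer cyclic subgroups of $\GL(\F_{q^m},\F_q)$, citing Huppert), observe that $\cC$ is then a $2$-dimensional $\F_{q^m}$-space, and right-compose with the inverse of an invertible element of the code to reach the form $\cC_f$. Your closing remark that the MRD property forces $f$ to be scattered is a correct supplement that the paper leaves implicit.
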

\begin{proof}
By using a ring isomorphism between $\F_q^{m\times m}$ and $End(\F_{q^m},\F_q)$, we may suppose that $\cC\subset End(\F_{q^m},\F_q)$. 
Since $\cN(\cC)\setminus\{{\bf 0}\}$ and $\cF_m\setminus\{{\bf 0}\}$ are two Singer cyclic subgroups of $\GL(\F_{q^m},\F_q)$, there exists $H \in \GL(\F_{q^m},\F_q)$ such that
\[H^{-1}\circ \cN(\cC) \circ H = \cF_m,\]
see for example \cite[pg. 187]{Huppert}. With $\cC':=H^{-1}\circ \cC$ we can see that $\cN(\cC')=\cF_m$. 
It means that $\cC'$ is a 2-dimensional vector space over $\cF_m$ and hence it can be written as 
\[
\cC'=\{\alpha r(x)+\beta s(x)\colon \alpha,\beta\in\F_{q^m}\},
\] for some $q$-polynomials $r(x),s(x)$ over $\F_{q^m}$. Since each MRD-code with parameters $(m,m,q;m-1)$ contains invertible elements (cf. \cite[Lemma 2.1]{LTZ2}), we may take $h(x)\in \cC'$ invertible. Then $h^{-1} \circ \cC'$ has the desired form, i.e. $h^{-1} \circ \cC'=\cC_f$ for some $q$-polynomial $f(x)$ over $\F_{q^m}$. 
\end{proof}

\begin{proposition}\label{prop:knowncodes}
The known $\F_q$-linear MRD-codes with parameters\linebreak $(m,m,q;m-1)$ and with middle nucleus isomorphic to $\F_{q^m}$, up to equivalence, arise from one of the following maximum scattered subspaces of $\F_{q^{m}}\times\F_{q^{m}}$:
\begin{enumerate}
\item $U_1= \{(x,x^{q^s}) \colon x\in \F_{q^m}\}$, $1\leq s\leq m-1$ $\gcd(s,m)=1$.
\item $U_2= \{(x,\delta x^{q^s} + x^{q^{m-s}})\colon x\in \F_{q^m}\}$, $\N_{q^m/q}(\delta)\neq 1$, $\gcd(s,m)=1$.
\end{enumerate}
\end{proposition}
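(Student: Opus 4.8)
The plan is to reduce the statement, by means of Proposition~\ref{prop:nuclei}, to an enumeration of the maximum scattered $\F_q$-subspaces of $\F_{q^m}\times\F_{q^m}$ that are presently known, and then to appeal to the existing constructions. Since the assertion concerns the \emph{known} codes, the only genuinely mathematical step is the passage from the code side to the scattered-subspace side; the remainder is a survey of the results recalled in the introduction and at the beginning of this section.

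First I would take an arbitrary known $\F_q$-linear MRD-code $\cC$ with parameters $(m,m,q;m-1)$ whose middle nucleus is isomorphic to $\F_{q^m}$. By Proposition~\ref{prop:nuclei}, $\cC$ is equivalent to a code $\cC_f$ of the form \eqref{Cf}, for some $q$-polynomial $f(x)$ over $\F_{q^m}$. As equivalence of codes preserves the minimum distance, $\cC_f$ is again MRD with $d=m-1$; and, as recalled following \cite{Sh} at the start of this section, $\cC_f$ has exactly these parameters precisely when the defining $\F_q$-subspace $U_f=\{(x,f(x))\colon x\in\F_{q^m}\}$ is maximum scattered. Hence every code under consideration arises, up to equivalence, from a maximum scattered $\F_q$-subspace of $\F_{q^m}\times\F_{q^m}$ through \eqref{Cf}.

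It then remains to recall which maximum scattered subspaces $U_f$ are known. These are precisely the two families displayed in the introduction: the subspaces $U_1=\{(x,x^{q^s})\colon x\in\F_{q^m}\}$ with $\gcd(s,m)=1$, producing the generalized Gabidulin codes (\cite{BL2000}, \cite{Sh}), and the subspaces $U_2=\{(x,\delta x^{q^s}+x^{q^{m-s}})\colon x\in\F_{q^m}\}$ with $\N_{q^m/q}(\delta)\ne 1$ and $\gcd(s,m)=1$, producing the codes of \cite{LP2001} and \cite{Sh}. Feeding either family into \eqref{Cf} reproduces, up to equivalence, all previously known MRD-codes with the prescribed invariants, which is the claim.

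The step I expect to be the most delicate is not a computation but the matching of equivalences: one must check that the conjugation-plus-composition equivalence produced in the proof of Proposition~\ref{prop:nuclei} is compatible with the $\Gamma\mathrm{L}$-equivalence under which $U_1$ and $U_2$ are listed as the two known families, so that the dictionary between ``known codes'' and ``known scattered subspaces'' neither loses nor introduces a family. This compatibility is exactly what the equivalence criteria for the codes $\cC_f$ in \cite{Sh} and \cite{CSMP2016} provide, and I would rely on them to close the argument.
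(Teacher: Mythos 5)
There is a genuine gap, and it sits exactly where you flag the argument as ``the most delicate step'' and then defer to references. Your reduction via Proposition~\ref{prop:nuclei} shows that every known code $\cC$ with the prescribed parameters and middle nucleus is equivalent to $\cC_f$ for \emph{some} $q$-polynomial $f$ with $U_f$ maximum scattered. But you then close by asserting that this $U_f$ must be one of the \emph{known} maximum scattered subspaces $U_1$, $U_2$. That does not follow: the known codes are not defined as ``codes arising from known scattered subspaces''; they are the explicit family of generalized twisted Gabidulin codes $\cH_{2,s}(\mu,h)=\{x\mapsto a_0x+a_1x^{q^s}+\mu a_0^{q^h}x^{q^{2s}}\}$, which are not presented in the shape $\cC_f$ at all (the third coefficient is tied to $a_0$). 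A priori, applying Proposition~\ref{prop:nuclei} to such a code could produce a maximum scattered subspace inequivalent to both $U_1$ and $U_2$ --- i.e.\ a ``new'' subspace hiding inside an ``old'' code --- and nothing in your argument excludes this. Your appeal to the equivalence criteria of \cite{Sh} and \cite{CSMP2016} only translates code equivalence into $\Gamma\mathrm{L}$-equivalence of subspaces once both sides are already in the form $\cC_f$; it does not perform the identification you need.

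The paper's proof does the missing work explicitly. It starts from the list $\cH_{2,s}(\mu,h)$, invokes \cite[Corollary~5.9]{LTZ2} to see that the middle nucleus is $\F_{q^m}$ exactly when $\mu=0$ or $m\mid 2s-h$, handles $\mu=0$ as the generalized Gabidulin case ($U_1$), and for $m\mid 2s-h$ passes to the adjoint code, which by \cite[Proposition~4.3]{LTZ} is $\cH_{2,s}(1/\mu,0)$, and checks by direct computation that this is equivalent to a code arising from $U_2$; finally it uses that both families are closed under the adjoint (the adjoint of $\cC_f$ being $\cC_{\hat f}$) to return from the adjoint to the original code. To repair your proposal you would need to supply precisely these steps: the explicit enumeration of the known codes, the nucleus criterion, and the adjoint computation identifying the twisted codes with the $U_2$ family.
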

\begin{proof}
The known $\F_q$-linear MRD-codes with parameters $(m,m,q;m-1)$, written as $\F_q$-linear maps over $\F_{q^m}$, are of the form
\[\cH_{2,s}(\mu,h):=\{x \mapsto a_0x+a_1x^{q^s}+\mu a_0^{q^h} x^{q^{2s}} \colon a_0,a_1\in \F_{q^m}\},\]
with $\gcd(s,m)=1$ and $\N_{q^{sm}/q^s}(\mu)\neq 1$.

By \cite[Corollary 5.9]{LTZ2} the middle nuclei of the codes $\cH_{2,s}(\mu,h)$ are isomorphic to $\F_{q^m}$ if and only if
$\mu=0$ or $m \mid 2s-h$. In the former case we obtain generalized Gabidulin codes arising from maximum scattered linear sets of pseudoregulus type, i.e. from maximum scattered subspaces of $\F_{q^{m}}\times\F_{q^{m}}$ of type $U_1$. If $m \mid 2s-h$, by \cite[Proposition 4.3]{LTZ} the adjoint code of $\cH_{2,s}(\mu,h)$ is equivalent to $\cH_{2,s}(1/\mu,2s-h)=\cH_{2,s}(1/\mu,0)$ and direct computations show that such a code is equivalent to a code arising from a maximum scattered subspace of type $U_2$. The assertion follows from the fact that the families of MRD-codes arising from maximum scattered subspaces of type $U_1$ and $U_2$, respectively, are both closed under the adjoint operation (following the terminology of \cite{Sh, Kantor, LuMaPoTr2011}, the adjoint code of $\cC_f$ is $\cC_{\hat{f}}$).

\end{proof}

Put $m=2n$, $n>1$ in the previous proposition. Note that if $n=2$ then a scattered $\F_q$--vector subspace $U_{b,s}$ (which means $\N_{q^{4}/q}(b)\ne 1$, cf. \cite{CSZ2017}) is of type either $U_2$ or $U_2^{\perp'}$. Now, we are able to prove that MRD-codes arising from scattered subspaces of form (\ref{form:Ubs}) with $n>2$ are new.

By using the same arguments as in Corollary \ref{prop:lingroupUbs}, the linear automorphism group $\cG_{i}$ of $U_i$, $i\in\{1,2\}$, is
\[\cG_1=\Bigg\{\left(
                  \begin{array}{cc}
                    a & 0 \\
                    0 & a^{q^s} \\
                  \end{array}
                \right)\ \colon a\in\F_{q^{2n}}^*\Bigg\},
      \quad\quad\cG_2=\Bigg\{\left(
                  \begin{array}{cc}
                    a & 0 \\
                    0 & a^{q^s} \\
                  \end{array}
                \right)\ \colon a\in\F_{q^{2}}^*\Bigg\}.
\]

This allows us to prove the following:
\begin{theorem}
\label{thm:new}
If $n>2$, the  $\F_q$--vector subspace of $\F_{q^{2n}}\times\F_{q^{2n}}$ \[U_{b,s}=\{(x,bx^{q^s}+x^{q^{s+n}})\colon x\in \F_{q^{2n}}\},\] with $b\in\F_{q^{2n}}^*$ and $1\leq s\leq n-1$ such that $\N_{q^{2n}/q^n}(b)\ne 1$ and $\gcd(s,n)=1$, is not equivalent to any subspace $U_i$, $i\in\{1,2\}$, under the action of the group $\Gamma {\rm L}(2,q^{2n})$.
\end{theorem}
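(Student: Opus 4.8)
The plan is to use the cardinality of the $\F_{q^{2n}}$-linear automorphism group as an invariant under $\Gamma\mathrm{L}(2,q^{2n})$-equivalence. The key observation is that if $U$ and $W$ are $\F_q$-subspaces of $\V$ with $U^\psi=W$ for some $\psi\in\Gamma\mathrm{L}(2,q^{2n})$, then the linear stabilizers are conjugate, i.e. $\psi^{-1}\cG_U\psi=\cG_W$. Indeed, for $g\in\cG_U$ one has $W^{\psi^{-1}}=U$, so $W^{\psi^{-1}g\psi}=(U^g)^\psi=U^\psi=W$; and since $\psi$ is semilinear while $g$ is linear, the companion automorphisms of $\psi^{-1}$ and $\psi$ cancel, so $\psi^{-1}g\psi$ is again $\F_{q^{2n}}$-linear and thus lies in $\cG_W$. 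Conjugation being a bijection, this forces $|\cG_U|=|\cG_W|$. Hence it suffices to show that $|\cG_{b,s}|$ differs from both $|\cG_1|$ and $|\cG_2|$.

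The second step is to read off the three orders. By Corollary \ref{prop:lingroupUbs} the group $\cG_{b,s}$ consists of the matrices $\mathrm{diag}(\alpha,\alpha^{q^s})$ with $\alpha\in\F_{q^n}^*$; since such a matrix is determined by its $(1,1)$-entry, the assignment $\alpha\mapsto\mathrm{diag}(\alpha,\alpha^{q^s})$ is a bijection onto $\cG_{b,s}$, whence $|\cG_{b,s}|=q^n-1$. I emphasise that this is the linear order, not the projective order $\frac{q^n-1}{q-1}$ computed in Proposition \ref{prop:geomproper}. From the explicit descriptions of $\cG_1$ and $\cG_2$ displayed just before the statement, the same reasoning gives $|\cG_1|=q^{2n}-1$ and $|\cG_2|=q^2-1$.

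The final step is the numerical comparison, which is where the hypothesis $n>2$ enters. One always has $q^n-1<q^{2n}-1$, so $|\cG_{b,s}|\ne|\cG_1|$ and $U_{b,s}$ cannot be $\Gamma\mathrm{L}(2,q^{2n})$-equivalent to $U_1$. Moreover $n>2$ gives $q^n-1>q^2-1$, so $|\cG_{b,s}|\ne|\cG_2|$ and $U_{b,s}$ cannot be equivalent to $U_2$ either. This completes the argument.

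I do not expect a genuine obstacle here: once one recognises that the cardinality of the linear stabilizer is an equivalence invariant, the theorem reduces to elementary arithmetic. The only points requiring care are the verification that conjugation by a semilinear map keeps a linear map linear — so that the stabilizers are honestly conjugate inside $\GL(2,q^{2n})$ — and the bookkeeping distinguishing the linear order $q^n-1$ of $\cG_{b,s}$ from its projective counterpart. It is also worth noting that the strict inequality $q^n-1>q^2-1$, and hence the exclusion of $U_2$, is exactly what fails at $n=2$, consistent with the remark preceding the theorem that for $n=2$ the space $U_{b,s}$ is of type $U_2$ or $U_2^{\perp'}$.
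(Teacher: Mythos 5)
Your proof is correct and follows essentially the same route as the paper: the authors likewise argue that a $\Gamma\mathrm{L}(2,q^{2n})$-equivalence would conjugate $\cG_{b,s}$ onto $\cG_i$, and then derive a contradiction by comparing the group orders $q^n-1$, $q^{2n}-1$ and $q^2-1$ using $n>2$. Your write-up just makes explicit the (correct) verification that conjugating a linear map by a semilinear one yields a linear map, which the paper leaves implicit.
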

\begin{proof}
If there exists an element $\varphi\in\Gamma{\rm L}(2,q^{2n})$ such that $U_{b,s}^\varphi=U_i$, for some $i\in\{1,2\}$, then the corresponding linear automorphism groups will be isomorphic via the map
$$\omega\in \cG_{b,s}\mapsto \varphi\circ\omega\circ\varphi^{-1}\in\cG_i,$$ but this is a contradiction by comparing the sizes of the related groups (cf. Corollary \ref{prop:lingroupUbs}).
\end{proof}

Let $\cC_f$ and $\cC_g$ be two MRD-codes arising from maximum scattered subspaces $U_f$ and $U_g$ of $\F_{q^m}\times \F_{q^m}$.
In \cite[Theorem 8]{Sh} the author showed that there exist invertible matrices $A$, $B$ such that $A \cC_f B=\cC_g$ if and only if $U_f$ and $U_g$ are  $\Gamma\mathrm{L}(2,q^m)$-equivalent. Hence, by Theorem \ref{thm:new}, we get the following result.

\begin{theorem}\label{thm:newMRD}
If $n>2$, the linear MRD-code of dimension $4n$ and minimum distance $2n-1$ arising from a scattered $\F_q$--vector subspace $U_{b,s}=\{(x,bx^{q^s}+x^{q^{s+n}})\colon x\in \F_{q^{2n}}\}$ of $\F_{q^{2n}}\times\F_{q^{2n}}$ is not equivalent to any previously known MRD-code with the same parameters.\qed
\end{theorem}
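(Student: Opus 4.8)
The plan is to reduce the code equivalence to the subspace equivalence already settled in Theorem \ref{thm:new}. Put $m=2n$. Since $U_{b,s}$ is scattered, the construction \eqref{Cf} produces an $\F_q$-linear MRD-code $\cC_{U_{b,s}}$ of dimension $4n$ and minimum distance $2n-1$ whose middle nucleus is isomorphic to $\F_{q^{2n}}$. Because the middle nucleus is an invariant under code equivalence (see Section \ref{sec:MRD}), any previously known MRD-code with parameters $(2n,2n,q;2n-1)$ that is equivalent to $\cC_{U_{b,s}}$ must itself have middle nucleus isomorphic to $\F_{q^{2n}}$; by Proposition \ref{prop:knowncodes} such a code arises, up to equivalence, from one of the subspaces $U_1$ or $U_2$. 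Thus it suffices to show that $\cC_{U_{b,s}}$ is not equivalent to $\cC_{U_i}$ for $i\in\{1,2\}$.

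There are two flavours of equivalence to exclude. For the direct one, namely the existence of invertible $A,B$ and an automorphism $\sigma$ with $\{A C^\sigma B\colon C\in\cC_{U_{b,s}}\}=\cC_{U_i}$, the result \cite[Theorem 8]{Sh} identifies this precisely with $\Gamma{\rm L}(2,q^{2n})$-equivalence of $U_{b,s}$ and $U_i$, which is ruled out for $n>2$ by Theorem \ref{thm:new}. For the transpose flavour, $\{A C^{T\sigma}B\colon C\in\cC_{U_{b,s}}\}=\cC_{U_i}$, I would use that the adjoint code $\cC_{U_{b,s}}^{T}$ equals $\cC_{\hat f}$, which by \eqref{form:hatf} arises from the orthogonal complement $U_{b,s}^{\perp'}$. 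Applying \cite[Theorem 8]{Sh} once more, this flavour is equivalent to $\Gamma{\rm L}(2,q^{2n})$-equivalence of $U_{b,s}^{\perp'}$ and $U_i$.

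The final ingredient is the observation, made in Section \ref{sec:family}, that $U_{b,s}^{\perp'}=U_{b^{q^{2n-s}},2n-s}$ is $\GL(2,q^{2n})$-equivalent to $U_{b,s}$ via the explicit map $\phi$. Hence $U_{b,s}^{\perp'}$ is $\Gamma{\rm L}(2,q^{2n})$-equivalent to $U_i$ if and only if $U_{b,s}$ is, and Theorem \ref{thm:new} again forbids this for $n>2$. Combining the two flavours, $\cC_{U_{b,s}}$ is equivalent to no code arising from $U_1$ or $U_2$, and therefore, by the reduction in the first paragraph, to no previously known MRD-code with parameters $(2n,2n,q;2n-1)$.

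Most of the argument is bookkeeping built on Theorem \ref{thm:new}; the one point that genuinely needs care is the transpose flavour of equivalence, since \cite[Theorem 8]{Sh} as quoted governs only two-sided multiplication $A\cC B$. The main obstacle is thus to certify that passing to the adjoint keeps us inside the family \eqref{form:Ubs}, which is exactly the content of the identity $U_{b,s}^{\perp'}=U_{b^{q^{2n-s}},2n-s}$ together with the $\GL$-equivalence $U_{b,s}\cong U_{b,s}^{\perp'}$, so that transposition opens no new avenue to $U_1$ or $U_2$.
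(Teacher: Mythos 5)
Your proposal is correct and follows essentially the same route as the paper: reduce to Proposition \ref{prop:knowncodes} via the invariance of the middle nucleus, then apply \cite[Theorem 8]{Sh} together with Theorem \ref{thm:new}. The only point of divergence is the transpose flavour of equivalence, which you settle explicitly via $U_{b,s}^{\perp'}=U_{b^{q^{2n-s}},2n-s}$ and its $\GL(2,q^{2n})$-equivalence with $U_{b,s}$ from Section \ref{sec:family}, whereas the paper disposes of it implicitly through the remark in the proof of Proposition \ref{prop:knowncodes} that the code families arising from $U_1$ and $U_2$ are closed under the adjoint operation; both closings are valid.
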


In the next section we will show that when $n=3$ and $q>2$ and when $n=4$ and $q$ is odd there exist values of $b$ and $s$ for which the $\F_q$-subspace $U_{b,s}$ of $\F_{q^{2n}} \times \F_{q^{2n}}$ is scattered, and from the above arguments the corresponding MRD-codes are new.

\section{New maximum scattered subspaces}

\subsection{\texorpdfstring{The $n=3$ case}{The n=3 case}}

We want to show that there exists $b\in\F_{q^6}^*$ such that
\[U_{b,1}:=\{(x,b x^q + x^{q^4})\colon x\in \F_{q^6}\}\]
is a maximum scattered $\F_q$-subspace.

$U_{b,1}$ is scattered if and only if for each $m\in \F_{q^6}$
\[\frac{b x^q + x^{q^4}}{x}=-m\]
has at most $q$ solutions. Those $m$ which admit exactly $q$ solutions correspond to points $\la(1,-m)\ra_{\F_{q^6}}$ of $L_{U_{b,1}}$ with weight one.
It follows that $U_{b,1}$ is scattered if and only if for each $m\in \F_{q^6}$ the kernel of
\[r_{m,b}(x):=mx+b x^q+x^{q^4}\]
has dimension less than two, or, equivalently, the Dickson matrix
\[D_{m,b}:=
\begin{pmatrix}
m&b&0&0&1&0\\
0&m^q&b^q&0&0&1\\
1&0&m^{q^2}&b^{q^2}&0&0\\
0&1&0&m^{q^3}&b^{q^3}&0\\
0&0&1&0&m^{q^4}&b^{q^4}\\
b^{q^5}&0&0&1&0&m^{q^5}\\
\end{pmatrix}
\]
associated to $r_{m,b}(x)$ has rank at least five (cf. \cite[Proposition 4.4]{WL}). Equivalently, $D_{m,b}$ has a non-zero $5 \times 5$ minor.
We will denote by $M_{i,j}$ the determinant of the matrix obtained from $D_{m,b}$ by removing the $i$-th row and the $j$-th column.
We will use the following:
\begin{equation}
\label{minor1}
M_{6,1}=b^{q^2}-b^{1+q^2+q^3}-b^{q+q^2+q^4}+b^{1+q+q^2+q^3+q^4}-b^{q^4}m^{q+q^2+q^3}-bm^{q^2+q^3+q^4},
\end{equation}
\begin{equation}
\label{minor5}
M_{6,5}=-b^{q^2}m+b^{q+q^2+q^4}m-bm^{q^3}+b^{1+q+q^4}m^{q^3}+b^{q^4}m^{1+q+q^2+q^3}.
\end{equation}

We will show that for certain choices of $b$ and $q$ there is no $m\in \F_{q^6}$ such that both of the above expressions are zero.

\begin{theorem}
\label{THM1}
For $q>4$ we can always find $b\in \F_{q^2}^*$, such that $U_{b,1}$ is a maximum scattered $\F_q$-subspace of $\F_{q^6}\times\F_{q^6}$.
\end{theorem}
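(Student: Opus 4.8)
The plan is to exploit the two explicit minor formulas \eqref{minor1} and \eqref{minor5}, specializing $b$ to lie in $\F_{q^2}^*$ so that $b^{q^2}=b$ and the Frobenius powers collapse dramatically. With $b\in\F_{q^2}$ we have $b^{q^2}=b$, $b^{q^3}=b^q$, $b^{q^4}=b$, $b^{q^5}=b^q$, so the exponents in $M_{6,1}$ and $M_{6,5}$ reduce to expressions involving only $b$ and $b^q$, and the norm condition $\N_{q^6/q^3}(b)=b^{1+q^3}=b^{1+q}=\N_{q^2/q}(b)\ne 1$ becomes a condition purely on the $\F_{q^2}$-element $b$. I would first rewrite both minors under this substitution, obtaining two polynomials in $m$ (of degrees determined by the surviving monomials $m^{q+q^2+q^3}$, $m^{q^2+q^3+q^4}$ in $M_{6,1}$ and $m^{q^3}$, $m^{1+q+q^2+q^3}$ in $M_{6,5}$) whose coefficients are fixed elements of $\F_{q^2}$.

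Next I would reformulate scatteredness as the statement that the two reduced minors $M_{6,1}(m)$ and $M_{6,5}(m)$ have no common root $m\in\F_{q^6}$. The strategy is to show that any simultaneous zero $m$ is forced to satisfy an algebraic relation that, combined with the $b$-constraints, is impossible for $q$ large. Concretely, I would treat the common-zero condition as a system of two equations over $\F_{q^6}$ and eliminate $m$, or better, use the symmetry of the reduced minors under the map $m\mapsto m^{q}$ (coming from the $\F_{q^2}$-rationality of $b$) to descend the problem. I expect that after elimination one arrives at a single polynomial equation $P(b)=0$ in $b$ alone (of bounded degree independent of $q$), so that a bad $b$ can exist only if it is a root of this fixed polynomial. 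Since there are $q^2-1$ candidate values of $b$ in $\F_{q^2}^*$ but only $O(1)$ forbidden ones (roots of $P$ together with those violating $\N_{q^2/q}(b)=1$, of which there are $q-1$), a counting argument shows a valid $b$ exists once $q^2-1$ exceeds the number of excluded values, which is where the hypothesis $q>4$ enters.

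The main obstacle I anticipate is the elimination step: showing that a common zero $m$ of the two minors forces $b$ into a fixed finite set independent of $q$. The difficulty is that $m$ ranges over $\F_{q^6}$ while $b$ lies in $\F_{q^2}$, so I must carefully track which Frobenius conjugates of $m$ appear and use the relations among $m,m^q,m^{q^2},\dots$ together with the $\F_{q^2}$-structure to collapse the system. A clean route is to interpret the existence of a common root via a resultant $\mathrm{Res}_m(M_{6,1},M_{6,5})$, argue it is a nonzero polynomial in $b$, and bound its degree so that the number of its roots is $O(1)$; then the pigeonhole over $\F_{q^2}^*$, minus the norm-one locus, guarantees a scattered choice. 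I would expect the degree bound and the nonvanishing of the resultant to require the most care, possibly verified by a direct factorization of the specialized minors rather than a blind resultant computation.
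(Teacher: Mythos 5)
Your opening move coincides with the paper's: restrict to $b\in\F_{q^2}^*$ so that the conjugates of $b$ collapse and work only with the two minors $M_{6,1}$ and $M_{6,5}$. But the heart of your argument --- that eliminating $m$ produces a polynomial $P(b)$ of degree bounded independently of $q$, so that only $O(1)$ values of $b$ are ``bad'' and pigeonhole over the $q^2-1$ elements of $\F_{q^2}^*$ finishes the proof --- is a genuine gap, and I do not believe it can be repaired. Viewed as ordinary polynomials in $m$, the two minors have degrees $q^2+q^3+q^4$ and $1+q+q^2+q^3$; any resultant in $m$ is a polynomial in $b$ and its conjugates of degree growing polynomially in $q$, so the bound ``$O(1)$ forbidden values'' does not follow from the elimination you describe. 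Worse, the structure of the problem indicates that the set of $b\in\F_{q^2}^*$ for which the two minors \emph{do} share a common zero is expected to be a positive proportion of $\F_{q^2}^*$ (the paper's closing conjecture asserts that only about half of the norm classes give scattered subspaces), so no argument that excludes only a bounded number of $b$'s can succeed. The role of the hypothesis $q>4$ is also not what you predict: it is not a pigeonhole threshold over $\F_{q^2}^*$.

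What the paper actually does after the common specialization is a structural descent rather than an elimination. Dividing the two vanishing conditions by $b$ and substituting $x=m^{1+q+q^2}$ and $z=1-b^{q+1}$ (so $z\in\F_q$, $z\ne 1$, and the norm condition is $z\ne 0$), the system forces $x\in\F_{q^2}$ and reduces to $x+x^q=z^2$, $x^{1+q}=z^3$; that is, $x$ and $x^q$ are the two roots of $X^2-z^2X+z^3=0$. One then \emph{chooses} $z$ so that this quadratic splits over $\F_q$ --- i.e.\ $z^2-4z$ is a square for odd $q$ (with a small conic argument when $q\equiv 3\pmod 4$), or $\Tr_{q/2}(1/z)=0$ for even $q$ --- which forces $x=x^q\in\F_q$ and hence $z\in\{0,4\}$, contradicting $z\notin\{0,1,4\}$. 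The condition $q>4$ enters only to guarantee that such a $z$ exists. So the proof certifies particular values of $z=1-b^{q+1}$ (roughly half of $\F_q$), rather than showing almost all $b$ work. Your vague suggestion to ``use the symmetry under $m\mapsto m^q$ to descend'' points in the right direction, but without the explicit substitution and the reduction to the quadratic, the argument does not close.
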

\begin{proof}
We want to find $b\in \F_{q^2}^*$ such that at least one of \eqref{minor1} and \eqref{minor5} is non-zero.
Suppose the contrary, i.e. for each $b\in \F_{q^2}$:
\begin{equation}
\label{eq1}
0=b(1-2b^{q+1}+b^{2q+2}-m^{q+q^2+q^3}-m^{q^2+q^3+q^4}),
\end{equation}
\begin{equation}
\label{eq3}
0=b(-m+b^{q+1}m-m^{q^3}+b^{q+1}m^{q^3}+m^{1+q+q^2+q^3}).
\end{equation}
Put $x=m^{1+q+q^2}$ and $z=1-b^{q+1}$. Obviously $z\ne 1$ and dividing \eqref{eq1} by $b$ gives
\begin{equation}
\label{f1}
z^2=x^q+x^{q^2},
\end{equation}
multiplying \eqref{eq3} by $m^{q^4+q^5}/b$ gives
\begin{equation}
\label{f3}
z(x^{q^3}+x^{q^4})=x^{q^3+1}.
\end{equation}

Since $b\in \F_{q^2}$, it follows that $b^{q+1}\in \F_q$ and hence $z\in \F_q$. Then \eqref{f1} yields $x^q+x^{q^2}\in \F_{q}$ and hence
$x\in \F_{q^2}$. Then \eqref{f1} and \eqref{f3} give:
\begin{equation}
\label{g1}
z^2=x+x^q,
\end{equation}
\begin{equation}
\label{g2}
z^3=x^{q+1}.
\end{equation}
Thus $x$ and $x^q$ are roots of the equation
\begin{equation}
\label{quadr}
X^2-z^2X+z^3=0.
\end{equation}
From now on we distinguish two cases according to the parity of $q$. First suppose $q$ odd.
If \eqref{quadr} can be solved in $\F_q$, then $x=x^q\in \F_q$ and hence \eqref{g1} and \eqref{g2} give $z=x=0$, or $z=4$, $x=8$.
If we can find $z\in \F_q \setminus \{0,1,4\}$ such that \eqref{quadr} has roots in $\F_q$, then we obtain a contradiction meaning that the two minors in consideration cannot vanish at the same time. Then $U_{b,1}$ is scattered for each $b\in \F_{q^2}$ which satisfies $1-b^{q+1}=z$. Equation \eqref{quadr} has roots in $\F_q$ if and only if $z^4-4z^3$ is a square, hence, when $z^2-4z$ is a square. Note that $z=2$ gives $z^2-4z=-4$, which is always a square when $q \equiv 1 \pmod 4$. So from now on, we may assume $q \equiv 3 \pmod 4$ and hence $q\geq 7$.
Consider the conic $\cC$ of $\PG(2,q)$ with equation $X_0^2-4X_0X_2-X_1^2=0$. It is easy to see that $\cC$ is always non-singular, and that the line with equation $X_0=0$ is a tangent to $\cC$. For $q\geq 7$ $\cC$ has more than 7 points and hence we can find a point of $\cC$ not on the lines $X_0=0$, $X_0-4X_2=0$, $X_0-X_2=0$ and $X_2=0$. It means that we can always find a point $\la(x_0,x_1,1)\ra_{\F_q} \in \PG(2,q)$ such that $x_0^2-4x_0=x_1^2$ and $x_0\in \F_q \setminus \{0,1,4\}$. It follows that we can always find $z$, and hence $b$, with the given conditions.

Now consider the case when $q$ is even. For $z\neq 0$ \eqref{quadr} has a solution in $\F_q$ if and only if the $S$-invariant of the equation, that is $\Tr_{q/2}(1/z)$, equals to zero. If there is a solution in $\F_q$, then \eqref{g1} and \eqref{g2} give $z=0$, so it is enough to prove that there exists $z\in\F_q\setminus \{0,1\}$, such that $\Tr_{q/2}(1/z)=0$. The existence of such $z$ gives a contradiction meaning that the two minors in consideration cannot vanish at the same time.
The equation $\Tr_{q/2}(x)=0$ has $q/2$ pairwise distinct roots in $\F_q$,
thus $\Tr_{q/2}(1/z)=0$ has $q/2-1$ non-zero solutions. It follows that for $q \geq 8$ we can find such $z$.
\end{proof}

\subsection{\texorpdfstring{The $n=4$ case}{The $n=4$ case}}

We will show that there exists $b\in\F_{q^8}^*$ such that
\[U_{b,1}:=\{(x,b x^q + x^{q^5})\colon x\in \F_{q^8}\}\]
is a maximum scattered $\F_q$-subspace for each odd $q$.

$U_{b,1}$ is scattered if and only if for each $m\in \F_{q^8}$
\[\frac{b x^q + x^{q^5}}{x}=-m\]
has at most $q$ solutions. Those $m$ which admit exactly $q$ solutions correspond to points $\la(1,-m)\ra_{\F_{q^8}}$ of $L_{U_{b,1}}$ with weight one.
It follows that $U_{b,1}$ is scattered if and only if for each $m\in \F_{q^8}$ the kernel of
\[r_{m,b}(x):=mx+b x^q+x^{q^5}\]
has dimension less than two, or, equivalently, the Dickson matrix
\[D_{m,b}:=
\begin{pmatrix}
m&b&0&0&0&1&0&0\\
0&m^q&b^q&0&0&0&1&0\\
0&0&m^{q^2}&b^{q^2}&0&0&0&1\\
1&0&0&m^{q^3}&b^{q^3}&0&0&0\\
0&1&0&0&m^{q^4}&b^{q^4}&0&0\\
0&0&1&0&0&m^{q^5}&b^{q^5}&0\\
0&0&0&1&0&0&m^{q^6}&b^{q^6}\\
b^{q^7}&0&0&0&1&0&0&m^{q^7}\\
\end{pmatrix}
\]
of $r_{m,b}(x)$ has a non-zero $7 \times 7$ minor. If we remove the first two columns and last two rows of the above matrix, then the remaining $6 \times 6$ submatrix $M$ has determinant $(b^{q+q^5}-1)m^{q^3+q^4}$. It follows that with $\N_{q^8/q^4}(b)\neq 1$ the only point of $L_{U_{b,s}}$ with weight larger than 2 is $\la(1,0)\ra_{\F_{q^8}}$. On the other hand, it is easy to see that $\la(1,0)\ra_{\F_{q^8}}$ is a point of $L_{U_{b,s}}$ if and only if $\N_{q^8/q^4}(b)=1$.

We will denote by $M_{i,j}$ the determinant of the matrix obtained from $D_{m,b}$ by cancelling the $i$-row and the $j$-th column.
We will use the following:
\begin{equation}
\label{minor2}
M_{8,2}=(b^{1+q^4}-1)^{q+q^2}(b^{q^3+q^4}m+m^{q^4})+m^{1+q^3+q^4+q^5}(b^{q^6}m^{q^2}+b^qm^{q^6}).
\end{equation}

\begin{theorem}
\label{THM2}
For odd $q$ and $b^2=-1$ the $\F_q$-subspace $U_{b,1}$ is maximum scattered in $\F_{q^8}\times\F_{q^8}$.
\end{theorem}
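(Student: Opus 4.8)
The plan is to follow the scheme of Theorem \ref{THM1}: translate scatteredness into a rank condition on the Dickson matrix $D_{m,b}$ and rule out a point of weight two. First I would record that here $\N_{q^8/q^4}(b)=b^{1+q^4}=b^{2}=-1\neq 1$, since $b\in\F_{q^2}$ forces $b^{q^4}=b$; the oddness of $q$ is already used, as $-1\neq 1$. By the discussion preceding \eqref{minor2} this means $\la(1,0)\ra_{\F_{q^8}}\notin L_{U_{b,1}}$, and for $m\neq 0$ the $6\times 6$ minor equals $(b^{q+q^5}-1)m^{q^3+q^4}=-2m^{q^3+q^4}\neq 0$ (again using $q$ odd, so $b^{q+q^5}=b^{2q}=-1$ and $-2\neq 0$), whence $D_{m,b}$ has rank at least $6$. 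Since every point of $L_{U_{b,1}}$ has weight at most two by Proposition \ref{notgreaterthan2}, it remains to prove that no $m\in\F_{q^8}$ gives a point of weight exactly two, i.e. that $D_{m,b}$ never has rank exactly $6$.

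The engine is a linearized identity special to $b^2=-1$. Writing $r(x)=r_{m,b}(x)=mx+bx^q+x^{q^5}$ and using $b^{q^4}=b$, $x^{q^8}=x$, a direct check gives
\[
b\,r(x)-r(x)^{q^4}=bmx-2x^q-m^{q^4}x^{q^4}=-h(x),\qquad h(x):=m^{q^4}x^{q^4}+2x^q-bmx .
\]
Hence every $x$ in the kernel $K=\ker r$ also satisfies $h(x)=0$, so $K\subseteq\ker r\cap\ker h$. Working in the ring of $q$-linearized polynomials under composition, I would eliminate the top $q$-degree term: the combination $r(x)-m^{-q^5}h(x)^{q}$ kills $x^{q^5}$ and produces the nonzero polynomial $R_1(x)=-2m^{-q^5}x^{q^2}+(b+b^qm^{q-q^5})x^q+mx$ of $q$-degree $2$, vanishing on $K$, with nonzero $x^{q^2}$-coefficient for $m\neq0$. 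Reducing $h$ modulo $R_1$ once more yields a polynomial $R_2$ of $q$-degree at most $1$ still vanishing on $K$. Since $K\subseteq\ker\bigl(\mathrm{gcrd}(r,h)\bigr)$, a weight-two point would force $\mathrm{gcrd}(r,h)$ to have $q$-degree at least $2$; but a nonzero $R_2$ of $q$-degree at most $1$ forces the $\mathrm{gcrd}$ to have $q$-degree at most $1$, so $\dim_q K\leq 1$ — the desired contradiction.

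Thus the theorem reduces to a single nonvanishing statement: for $q$ odd, $b^2=-1$, and every admissible $m\in\F_{q^8}^*$, the remainder $R_2$ is not identically zero. I expect this to be the main obstacle, and it is exactly where the explicit minor \eqref{minor2} enters, since the two $\F_{q^8}$-coefficients of $R_2$ should coincide, up to nonzero factors, with the two $7\times 7$ minors that must both vanish at a weight-two point, one of them being $M_{8,2}$. Substituting $b^2=-1$ (so $(b^{1+q^4}-1)^{q+q^2}=(-2)^{q+q^2}=4$, $\ b^{q^3+q^4}=b^{1+q}=:\nu$ with $\nu^2=1$, $\ b^{q^6}=b$, $\ b^{q-1}=-\nu$) collapses $M_{8,2}$ to
\[
4(\nu m+m^{q^4})+b\,m^{1+q^3+q^4+q^5}\bigl(m^{q^2}-\nu m^{q^6}\bigr),
\]
and the task is to show that this, together with its companion, cannot vanish simultaneously for any $m\in\F_{q^8}^*$.

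As in Theorem \ref{THM1} I would then clear denominators and pass to suitable relative-norm variables over $\F_{q^4}$ (exploiting $b\in\F_{q^2}$ and $\nu=\pm1$), reducing the pair of conditions to a low-degree equation whose solvability is controlled by whether a fixed element is a square in $\F_q$; the hypothesis that $q$ is odd is what guarantees the resulting quadratic/conic has no admissible solution. The only bookkeeping complication I anticipate is handling uniformly the two subcases $b^q=b$ (when $q\equiv1\bmod4$) and $b^q=-b$ (when $q\equiv3\bmod4$), which affect $\nu$ but not the overall structure of the argument.
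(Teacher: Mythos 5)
Your setup is sound: the norm computation $\N_{q^8/q^4}(b)=b^{1+q^4}=b^2=-1\ne 1$, the rank-$\ge 6$ observation from the $6\times 6$ minor, the identity $b\,r(x)-r(x)^{q^4}=bmx-2x^q-m^{q^4}x^{q^4}$ (which is correct, using $b^{q^4}=b$ and $b^2=-1$), and the specialisation of $M_{8,2}$ under $b^2=-1$ all check out and reproduce, in effect, equation \eqref{eq2} of the paper. The logical frame --- a weight-two point forces the greatest common right divisor of $r$ and $h$ to have $q$-degree $\ge 2$, so it suffices to exhibit a nonzero remainder $R_2$ of $q$-degree $\le 1$ vanishing on $\ker r$ --- is also valid.

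The gap is that the entire content of the theorem is the nonvanishing statement you defer: showing that $R_2$ (equivalently, that $M_{8,2}$ after substituting $b^2=-1$) is nonzero for every $m\in\F_{q^8}^*$. You explicitly flag this as ``the main obstacle'' and then sketch a finish by analogy with Theorem \ref{THM1}: clearing denominators, passing to norm variables, and reducing to a quadratic whose solvability is governed by a square condition in $\F_q$. That is not how this case goes, and there is no evidence the conic mechanism applies here. The paper's proof instead applies $\Tr_{q^8/q^4}$ to \eqref{eq2}: the term $4(b^{q+1}m+m^{q^4})$ is killed (or handled) by the trace, and since $m^{q^2}\pm m^{q^6}$ and $bm^{1+q^4}$ lie in $\F_{q^4}$, one deduces $\Tr_{q^8/q^4}(m^{q^3+q^5})=0$; then the elementary facts that $\Tr_{q^8/q^4}(x)=\Tr_{q^8/q^4}(y)=0$ forces $xy\in\F_{q^4}$, and that $\Tr_{q^8/q^4}(x)=0$ with $xy\in\F_{q^4}$ forces $\Tr_{q^8/q^4}(y)=0$, yield $m^{q^4-1}\in\F_{q^4}$, hence $m^2\in\F_{q^4}$, and substituting back into \eqref{eq2} gives $m=0$ in every subcase. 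Note also that the paper proves the stronger fact that the single minor $M_{8,2}$ never vanishes for $m\ne 0$, so no ``companion'' minor is needed. Without carrying out this (or some) nonvanishing argument, your proposal reduces the theorem to itself rather than proving it.
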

\begin{proof}
We will show that there is no $m\in \F_{q^8}^*$ such that \eqref{minor2} vanishes.
Applying $b^2=-1$, the vanishing of \eqref{minor2} would give
\begin{equation}
\label{eq2}
0=4(b^{q+1}m+m^{q^4})+m^{1+q^3+q^4+q^5}(b m^{q^2}+b^qm^{q^6}).
\end{equation}
Now we distinguish two cases, according to $b\in \F_q$ (i.e., $q \equiv 1 \pmod 4$), or $b\in \F_{q^2} \setminus \F_q$ (i.e., $q \equiv 3 \pmod 4$). First suppose that the former case holds. Then
\begin{equation}
\label{eqe2}
0=4(-m+m^{q^4})+bm^{1+q^3+q^4+q^5}(m^{q^2}+m^{q^6}).
\end{equation}
Considering the $\F_{q^8}\rightarrow \F_{q^4}$ trace of both sides of \eqref{eqe2} and using the $\F_{q^4}$-linearity of this function, it follows that $\Tr_{q^8/q^4}(m^{q^3+q^5})=0$.
It is easy to see that $\Tr_{q^8/q^4}(x)=\Tr_{q^8/q^4}(y)=0$ implies $xy\in \F_{q^4}$ for any two $x,y\in \F_{q^8}$, thus
$m^{q^3+q^5}m^{q^2+q^4}$ and $m^{q^3+q^5}m^{q^4+q^6}$ are in $\F_{q^4}$. It follows that
$bm^{1+q^3+q^4+q^5}(m^{q^2}+m^{q^6})=m \lambda$ for some $\lambda\in \F_{q^4}$ and hence \eqref{eqe2} gives
$m^{q^4-1} \in \F_{q^4}$. But also $m^{q^4+1}\in \F_{q^4}$ and hence $m^2\in \F_{q^4}$ giving either $m\in \F_{q^4}$, or $\Tr_{q^8/q^4}(m)=0$, but \eqref{eqe2} gives $m=0$ in both cases.

Now consider the $b\in \F_{q^2} \setminus \F_q$ case. Then $b^{q+1}=1$ and $b^q=-b$, thus \eqref{eq2} gives
\begin{equation}
\label{eqf2}
0=4(m+m^{q^4})+bm^{1+q^3+q^4+q^5}(m^{q^2}-m^{q^6}).
\end{equation}
Since $4(m+m^{q^4})\in \F_{q^4}$ and $bm^{1+q^4}\in \F_{q^4}$, it follows that
$m^{q^3+q^5}(m^{q^2}-m^{q^6})\in \F_{q^4}$.
It is easy to see that $\Tr_{q^8/q^4}(x)=0$ and $xy\in \F_{q^4}$ implies $\Tr_{q^8/q^4}(y)=0$ for any two $x,y\in \F_{q^8}$, thus
$\Tr_{q^8/q^4}(m^{q^3+q^5})=0$. Then, as in the previous case, $m^2\in \F_{q^4}$ follows, which gives a contradiction.
\end{proof}


\begin{remark}
It follows from Theorem \ref{thm:new} that the maximum scattered subspaces of this section are new, i.e. they cannot be obtained from previously known maximum scattered subspaces under the action of $\Gamma \mathrm{L}(2,q^n)$, $n=6,8$.

The question of the equivalence of the corresponding linear sets under the action of the group $\mathrm{P}\Gamma \mathrm{L}(2,q^n)$ is addressed in \cite{CSMaZu}.
\end{remark}

\begin{remark}
Computations with {\tt GAP} yield the following results.

With respect to the cases not covered by Theorem \ref{THM1}: there exist $b\in \F_{q^6}^*$ such that the subspace
$\{(x,bx^q+x^{q^4}) \colon x\in \F_{q^6}\}$ is scattered in $\F_{q^6}\times \F_{q^6}$ also for $q\in \{3,4\}$, but not for $q=2$.

With respect to Theorem \ref{THM2}: for $q\leq 8$, $q$ even, there is no $b\in \F_{q^8}^*$ such that
$\{(x,bx^q+x^{q^5}) \colon x\in \F_{q^8}\}$ is scattered in $\F_{q^8}\times \F_{q^8}$ and for $q\leq 11$, $q$ odd, the corresponding subspace is scattered if and only if $b^{q^4+1}=-1$. According to the first paragraph of Section \ref{sec:family}, each of these subspaces is equivalent to the scattered subspace found in Theorem \ref{THM2}.

There is no $b\in \F_{q^{2n}}^*$ such that
$\{(x,bx^{q^s}+x^{q^{n+s}}) \colon x\in \F_{q^{2n}}\}$, $\gcd(s,n)=1$, is scattered in $\F_{q^{2n}}\times \F_{q^{2n}}$ when
$q\leq 5$ and $n\in \{5,6,7,8\}$, or $q=7$ and $n\in \{5,6,7\}$, or $q=7$ and $n=8$, or $q=8$ and $n=5$.
\end{remark}

\begin{conjecture}
According to the first paragraph of Section \ref{sec:family}, $f_1(x)=b_1 x^{q}+x^{q^4} \in \F_{q^6}[x]$ and $f_2(x)=b_2 x^{q}+x^{q^4} \in \F_{q^6}[x]$
define equivalent subspaces when $\N_{q^6/q^3}(b_1)=\N_{q^6/q^3}(b_2)$. We conjecture that the size of the set
\[ \{ \N_{q^6/q^3}(b) \colon f(x)=bx^{q}+x^{q^4} \text{ defines a maximum scattered $\F_q$-space $U_{b,1}$} \}\]
is $\lfloor(q^2+q+1)(q-2)/2\rfloor$, and hence there might be further examples of maximum scattered subspaces in this family.
By {\tt GAP} we verified this conjecture for $q\leq 32$.
\end{conjecture}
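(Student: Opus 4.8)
The plan is to first reduce scatteredness of $U_{b,1}$ to an explicit condition on the single scalar $\N_{q^6/q^3}(b)$, and then to recast the conjecture as the computation of the size of the image of one explicit map. For the reduction, observe that $U_{b,1}$ is scattered exactly when $\phi_b\colon x\mapsto (bx^q+x^{q^4})/x$ separates $\F_q$-lines, i.e. $\phi_b(x_1)=\phi_b(x_2)$ forces $x_2/x_1\in\F_q$. Writing $y=x_2/x_1\notin\F_q$ and clearing denominators in $\phi_b(x_1)=\phi_b(x_1y)$ gives
\[
b\,x_1^{q-q^4}=-\frac{y-y^{q^4}}{y-y^q}.
\]
Since $x_1^{q-q^4}=(x_1^{1-q^3})^q$ runs exactly over the norm-one subgroup of $\F_{q^6}^*$ as $x_1$ varies (the map $x_1\mapsto x_1^{1-q^3}$ is onto that subgroup, of order $q^3+1$, and raising to the $q$-th power permutes it because $\gcd(q,q^3+1)=1$), and since $\N_{q^6/q^3}(-1)=1$, applying $\N_{q^6/q^3}$ to both sides shows that $U_{b,1}$ \emph{fails} to be scattered if and only if
\[
\N_{q^6/q^3}(b)\in \cB:=\Bigl\{\N_{q^6/q^3}\bigl((y-y^{q^4})/(y-y^q)\bigr)\ \colon\ y\in\F_{q^6}\setminus\F_{q^2}\Bigr\}.
\]
The restriction $y\notin\F_{q^2}$ only discards the excluded value $0$. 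In particular scatteredness depends on $b$ solely through $\N_{q^6/q^3}(b)$, which is precisely the hypothesis underlying the conjecture, and the whole statement becomes the enumerative identity
\[
\bigl|(\F_{q^3}^*\setminus\{1\})\setminus\cB\bigr|=\Bigl\lfloor (q^2+q+1)(q-2)/2\Bigr\rfloor .
\]

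So the task is to count the image $\cB\subseteq\F_{q^3}^*$ of $h(y)=\N_{q^6/q^3}\bigl((y-y^{q^4})/(y-y^q)\bigr)$. A useful structural observation is that, setting $a_i=y^{q^i}$ and using $h(y)=h(y)^{q^3}=g(y)\,g(y)^{q^3}$, one obtains
\[
h(y)=\frac{(a_0-a_4)(a_3-a_1)}{(a_0-a_1)(a_3-a_4)},
\]
the cross-ratio of the four Frobenius conjugates $y,y^q,y^{q^3},y^{q^4}$. Thus $\cB$ is the set of cross-ratios attained by these Frobenius-conjugate quadruples. Two symmetries are then available to organize the count: the involution $y\mapsto y^{q^3}$, which fixes the exponent set $\{0,1,3,4\}$ (as $\{0,1,3,4\}+3\equiv\{0,1,3,4\}\bmod 6$) and hence preserves $h$, making $h$ generically two-to-one on the domain; and the $\mathrm{P}\Gamma\mathrm{L}$-invariance of the cross-ratio. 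I would also try to exploit the product shape $(q^2+q+1)(q-2)=\frac{q^3-1}{q-1}\cdot(q-2)$, which strongly suggests that $\cB$ (or its complement) decomposes along the cosets of $\F_q^*$ in $\F_{q^3}^*$, with a binary condition inside each coset accounting for the factor close to $1/2$.

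The heart of the proof, and the step I expect to be hardest, is the exact determination of $|\cB|$. The expected factor $1/2$ should come from a quadratic-residue dichotomy in $\F_{q^3}$ mirroring the one already met in Theorem \ref{THM1}, where scatteredness hinged on whether a discriminant of shape $z^2-4z$ is a square; here I would aim to parametrise the fibres of $h$ and reduce membership of $N$ in $\cB$ to the solvability of an associated quadratic over $\F_{q^3}$, the number of feasible parameters then splitting by the quadratic character (with even $q$ handled through the absolute-trace criterion, exactly as in Theorem \ref{THM1}). The genuine difficulty is that a generic-fibre count alone yields only the leading term $\tfrac12 q^3+O(q^2)$, whereas the statement carries a floor function: to pin it down one must control the degenerate fibres of $h$, namely the conjugate quadruples with conjugates in proper subfields or with nontrivial stabiliser, together with the preimages of the special cross-ratio values $0,1,\infty$ and the question of whether $1\in\cB$. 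I would expect these corrections to be extracted by analysing the curve $h(y)=N$ and its ramification (equivalently, the incidence between the norm-one torus and the cross-ratio fibres) and then making a Weil-type estimate exact. This bookkeeping of the exceptional loci is the most delicate part, which is consistent with the statement being left as a conjecture and only verified computationally for $q\le 32$.
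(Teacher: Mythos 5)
First, a point of reference: the statement you were asked to prove is presented in the paper as a \emph{conjecture} --- the authors give no proof, only a {\tt GAP} verification for $q\le 32$ --- so there is no proof in the paper to compare against. Your reduction, however, is correct and actually goes beyond what the paper records. The computation leading to $b\,x_1^{q-q^4}=-(y-y^{q^4})/(y-y^q)$, the identification of the image of $x_1\mapsto x_1^{q-q^4}$ with the norm-one subgroup of order $q^3+1$, and the resulting criterion that $U_{b,1}$ is scattered if and only if $\N_{q^6/q^3}(b)\notin\cB$ are all sound (one also checks, as you implicitly do, that $\N_{q^6/q^3}(b)=1$ forces a kernel of $\F_q$-dimension $3$, so $1\in\cB$ and the $b=0$ case is excluded). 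The cross-ratio interpretation of $\N_{q^6/q^3}\bigl((y-y^{q^4})/(y-y^q)\bigr)$ and the $y\mapsto y^{q^3}$ symmetry are genuinely useful structural observations. This correctly recasts the conjecture as the exact identity $|\F_{q^3}^*\setminus\cB|=\lfloor(q^2+q+1)(q-2)/2\rfloor$.

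This is also where the proposal stops being a proof. The entire quantitative content of the statement is the exact value of $|\cB|$, and for that you offer only a programme: a hoped-for quadratic-character dichotomy on the cosets of $\F_q^*$, a Weil-type estimate on the fibres of $h$, and ``bookkeeping of the exceptional loci.'' None of these steps is carried out, and the last one is precisely the obstruction: a character-sum or curve-counting argument yields $|\cB|=\tfrac12 q^3+O(q^{3/2})$ at best, which cannot separate the conjectured formula $(q^3-q^2-q-2)/2$ (with its floor) from nearby values; pinning down the lower-order terms requires complete control of all degenerate quadruples $(y,y^q,y^{q^3},y^{q^4})$, of the special cross-ratio values, and of the exact fibre multiplicities of $h$. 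Since that is exactly the part left undone --- and presumably what kept the authors from proving the statement --- the proposal establishes a correct and worthwhile reformulation, but not the conjecture itself.
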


\begin{remark}
The maximum number of directions determined by an $\F_q$-linear function over $\F_{q^n}$ is $(q^n-1)/(q-1)$. Also, the maximum size of an $\F_q$-linear blocking set of R\'edei type of $\PG(2,q^n)$ is $q^n+(q^n-1)/(q-1)$. According to \cite[Section 5.3]{CSMP2016} our new examples of maximum scattered spaces yield new examples of functions and of blocking sets which attain these bounds.

In \cite[pg. 132]{GM2014} the maximal cardinality of the image set $\mathrm{Im}(L(x)/x)$ is considered (with $x\mapsto 1/x$ defined to take 0 to 0), where $L(x)$ is an $\F_p$-linear function over $\F_q$, $p$ is a prime and $q$ is a power of $p$. If for some invertible $p$-polynomial $f$, the subspace  $U_f=\{(x,f(x))\colon x\in \F_q\}$ is scattered, then the cardinality of $\mathrm{Im}(L(x)/x)$ reaches its maximum, which is $1+(q-1)/(p-1)$. It follows that the maximum scattered subspaces constructed in this paper yield such functions.
\end{remark}

 \subsection*{Acknowledgement}
The research  was supported by Ministry for Education, University and Research of Italy MIUR (Project
PRIN 2012 "Geometrie di Galois e strutture di incidenza") and by the Italian National
Group for Algebraic and Geometric Structures and their Applications (GNSAGA
- INdAM).

\bigskip

\noindent Bence Csajb\'ok\\
MTA--ELTE Geometric and Algebraic Combinatorics Research Group\\
ELTE E\"otv\"os Lor\'and University, Budapest, Hungary\\
Department of Geometry\\
1117 Budapest, P\'azm\'any P.\ stny.\ 1/C, Hungary\\
{{\em csajbok.bence@gmail.com}}

\bigskip

\noindent Giuseppe Marino, Olga Polverino\\
Dipartimento di Matematica e Fisica,\\
Universit\`a degli Studi della Campania ``Luigi Vanvitelli'',\\
Viale Lincoln 5, I-\,81100 Caserta, Italy\\
{{\em giuseppe.marino@unicampania.it}, {\em olga.polverino@unicampania.it}}
\bigskip

\noindent Corrado Zanella\\
Dipartimento di Tecnica e Gestione dei Sistemi Industriali,\\
Universit\`a di Padova,\\
Stradella S. Nicola, 3, I-36100 Vicenza, Italy\\
{{\em corrado.zanella@unipd.it}}

\bigskip

\begin{thebibliography}{pippo}


\bibitem{BlLa}
{\sc A. Blokhuis and M. Lavrauw:}
Scattered Spaces with Respect to a Spread in $\PG(n,q)$, {\it Geom. Dedicata} 81, No.1-3 (2000), 230--243.

\bibitem{BBL2000} {\sc S. Ball, A. Blokhuis and M. Lavrauw:} Linear $(q+1)$-fold blocking sets in $PG(2,q^4)$,
{\it Finite Fields Appl.} {\bf 6} n. 4 (2000), 294--301.

\bibitem{BGMP2015}
{\sc D. Bartoli, M. Giulietti, G. Marino and O. Polverino:} Maximum scattered linear sets and complete caps in Galois spaces, {\it Combinatorica} (2017), 1--24, DOI:  10.1007/s00493-016-3531-6.


\bibitem{BL2000} {\sc A. Blokhuis and M.  Lavrauw:} Scattered spaces with respect to a
spread in $PG(n,q)$, {\it  Geom. Dedicata} {\bf  81}  No.1--3
(2000), 231--243.


\bibitem{CSMP2016} {\sc B. Csajb\'ok, G. Marino and O. Polverino:} Classes and equivalence of linear sets in $\mathrm{PG}(1,q^n)$. Submitted manuscript.
\href{https://arxiv.org/abs/1607.06962}{https://arxiv.org/abs/1607.06962}

\bibitem{CSMPZ2016} {\sc B. Csajb\'ok, G. Marino, O. Polverino and F. Zullo:} Maximum scattered linear sets and MRD-codes. {\it J.\ Algebraic.\ Combin.} (2017), 1--15, DOI: 10.1007/s10801-017-0762-6.


\bibitem{CSMaZu} {\sc B. Csajb\'ok, G. Marino and F. Zullo:}  New maximum scattered linear sets of the projective line. Manuscript.


\bibitem{CSZ2015} {\sc B. Csajb\'ok and C. Zanella:} On the equivalence of linear sets, {\it Des. Codes Cryptogr.} {\bf 81} (2016), 269--281.

\bibitem{CSZ2016} {\sc B. Csajb\'ok and C. Zanella:} On linear sets of pseudoregulus type in $\PG(1,q^t)$ , {\it Finite Fields Appl.} {\bf 41} (2016), 34--54.

\bibitem{CSZ2017} {\sc B. Csajb\'ok and C. Zanella:} Maximum scattered $\F_q$-linear sets of $\mathrm{PG}(1,q^4)$. {\it Discrete Math.}, to appear. \href{https://arxiv.org/pdf/1705.00731.pdf}{https://arxiv.org/pdf/1705.00731.pdf}

\bibitem{Delsarte} {\sc P. Delsarte:} Bilinear forms over a finite field, with applications to coding theory, {\it J.\ Combin.\ Theory Ser.\ A} {\bf 25} (1978), 226--241.

\bibitem{Gabidulin}{\sc E. Gabidulin:} Theory of codes with maximum rank distance, {\it Probl. Inf. Transm.} {\bf 21}(3) (1985), 3--16.

\bibitem{gadouleau_properties_2006} {\sc M.~Gadouleau and Z.~Yan:} Properties of codes with the rank metric, {\it IEEE Global Telecommunications Conference 2006}, 1--5.

\bibitem{GM2014} {\sc F. G\"olo\u{g}lu and G. McGuire:} On theorems of Carlitz and Payne on permutation polynomials over finite fields with an application to $x^{-1}+L(x)$, {\it Finite Fields Appl.} {\bf 27} (2014), 130--142.

\bibitem{Huppert} {\sc B. Huppert} Endliche Gruppen, volume 1. Springer Berlin-Heidelberg-New York, 1967.

\bibitem{Kantor} {\sc W. M. Kantor:} Commutative semifields and symplectic spreads, {\it J. Algebra}, {\bf 270} (1) (2003), 96–-114.

\bibitem{koetter_coding_2008} {\sc R.~Koetter and F.~Kschischang:} Coding for errors and erasure in random network coding. {\it IEEE Trans. Inform. Theory}, 54(8):3579--3591, Aug. 2008.

\bibitem{LPhdThesis} {\sc M. Lavrauw:} {\it Scattered Spaces with respect to Spreads and Eggs in Finite Projective
Spaces}, Ph.D. Thesis, 2001.

\bibitem{Lavrauw} {\sc M. Lavrauw:} Scattered spaces in Galois Geometry, {\it Contemporary Developments in Finite Fields and Applications}, 2016, 195--216.

\bibitem{LMPT} {\sc M. Lavrauw, G. Marino, O. Polverino and R. Trombetti:} $\F_q$--pseudoreguli of $PG(3,q^3)$ and scattered semifields of order $q^6$,
{\it Finite Fields Appl.}, {\bf 17} (2011), 225--239.

\bibitem{LaMaPoTr2013} {\sc M. Lavrauw, G. Marino, O. Polverino and R. Trombetti:} Solution to an isotopism question concerning rank 2 semifields, {\it J. Combin. Des.}, \textbf{23} (2015), 60--77.

\bibitem{LV} {\sc M. Lavrauw and G. Van de Voorde:} Scattered linear sets and pseudoreguli, {\it Electron. J. Combin.} {\bf 20}(1) (2013).

\bibitem{LV2013} {\sc M. Lavrauw and G. Van de Voorde:} Field reduction and linear sets in finite geometry,
in: Gohar Kyureghyan, Gary L. Mullen, Alexander Pott (Eds.), Topics in Finite Fields, Contemp.\ Math.\, AMS (2015).

\bibitem{Lu2017} {\sc G. Lunardon:} MRD-codes and linear sets, {\it J. Combin. Theory Ser. A} \textbf{149} (2017), 1--20.

\bibitem{LuMaPoTr2011} {\sc G. Lunardon, G. Marino, O. Polverino and R. Trombetti:} Symplectic semifield spreads of $PG(5,q)$ and the veronese surface, {\it Ricerche mat.} {\bf 60} (2011), 125-–142.

\bibitem{LuMaPoTr2014} {\sc G. Lunardon, G. Marino, O. Polverino and R. Trombetti:} Maximum scattered linear sets of pseudoregulus type and the Segre Variety ${\cal S}_{n,n}$, {\it J.\ Algebraic Combin.}\ \textbf{39} (2014), 807--831.

\bibitem{LP2001} {\sc G. Lunardon and O. Polverino:} Blocking sets and derivable partial spreads, {\it J. Algebraic
Combin.}, {\bf 14} (2001), 49--56.

\bibitem{LTZ}{\sc G. Lunardon, R. Trombetti and Y. Zhou:} Generalized Twisted Gabidulin Codes, \href{http://arxiv.org/abs/1507.07855}{http://arxiv.org/abs/1507.07855}.

\bibitem{LTZ2}{\sc G. Lunardon, R. Trombetti and Y. Zhou:} On kernels and nuclei of rank metric codes, {\it J. Algebraic Combin.}, to appear. DOI 10.1007/s10801-017-0755-5.

\bibitem{LN2016} {\sc D. Liebhold and G. Nebe:} Automorphism groups of Gabidulin-like codes, {\it Archiv der Mathematik} {\bf 107} (4) (2016), 355--366.

\bibitem{MaPo2015}{\sc G. Marino and O. Polverino:} On translation spreads of $H(q)$, {\it J. Algebraic Combin.} {\bf 42} (2015), 725--744.

\bibitem{MPT2007} {\sc G. Marino, O. Polverino and R. Trombetti:} On ${\mathbb F}_q$--linear sets of $\PG(3,q^3)$ and semifields, {\it J. Combin.
Theory Ser. A} {\bf 114} (2007), 769--788.

\bibitem{morrison_equivalence_2013}{\sc K.~Morrison:} Equivalence for rank-metric and matrix codes and automorphism groups of gabidulin codes, {\it {IEEE} Trans. Inform. Theory}, {\bf 60} n.11 (2014), 7035--7046.


\bibitem{OP2010} {\sc O. Polverino:} Linear sets in finite projective spaces, {\it Discrete Math.} {\bf 310} (2010), 3096--3107.

\bibitem{Sh} {\sc J. Sheekey:} A new family of linear maximum rank distance codes, {\it Adv. Math. Commun.} {\bf 10}(3) (2016), 475--488.

\bibitem{WL} {\sc B. Wu and Z. Liu:} Linearized polynomials revisited, {\it Finite Fields Appl.} {\bf 22} (2013), 79--100.

\end{thebibliography}
\end{document}